\theoremstyle{plain}
\newtheorem{thm}{Theorem}
\newtheorem{lem}[thm]{Lemma}
\newtheorem{cor}[thm]{Corollary}
\newtheorem{prop}[thm]{Proposition}
\newtheorem{obs}[thm]{Observation}
\newtheorem*{claim-non}{Claim}
\theoremstyle{definition}
\newtheorem{conj}[thm]{Conjecture}
\newtheorem{ques}[thm]{Question}
\theoremstyle{remark}
\definecolor{gray}{rgb}{.4,.4,.4}
\newcommand{\defi}[1]{\textit{#1}}
\def\ignore#1{{}}
\def\ps@pprintTitle{%
 \let\@oddhead\@empty
 \let\@evenhead\@empty
 \def\@oddfoot{\footnotesize\itshape\hfill\today}%
 \let\@evenfoot\@oddfoot}
\begin{document}

\begin{frontmatter}

\title{Spanning trees with nonseparating paths\tnoteref{todos}}
\tnotetext[todos]{Research partially supported by CNPq (Proc.~477203/2012-4), FAPESP (Proc.~2013/03447-6), and Project MaCLinC of NUMEC/USP.}

\author[ccc]{Cristina~G.~Fernandes\fnref{thkcris}}
\ead{cris@ime.usp.br}

\author[ccc]{C\'esar~Hern\'andez--V\'elez\corref{cor1}\fnref{thkcesar}}
\ead{israel@ime.usp.br}

\author[lee]{Orlando~Lee\fnref{thklee}}
\ead{lee@ic.unicamp.br}

\author[ccc]{Jos\'e~C.~de~Pina}
\ead{coelho@ime.usp.br}

\cortext[cor1]{Corresponding author}
\fntext[thkcris]{Partially supported by CNPq (Proc.~308523/2012-1).}
\fntext[thkcesar]{Supported by FAPESP (Proc.~2012/24597-3).}
\fntext[thklee]{Supported by Bolsa de Produtividade do CNPq (Proc.~303947/2008-0) and Edital Universal CNPq (Proc.~477692/2012-5).}


\address[ccc]{Instituto de Matem\'atica e Estat\'istica, Universidade de S\~ao Paulo. S\~ao Paulo -- SP, Brazil 05508-090}
\address[lee]{Instituto de Computa\c{c}\~ao, Universidade Estadual de Campinas. Campinas -- SP, Brazil 13083-852}

\begin{abstract}
We consider questions related to the existence 
of spanning trees in graphs with the property 
that after the removal of any path in the tree 
the graph remains connected.
We show that, for planar graphs, 
the existence of trees with this property is closely 
related to the Hamiltonicity of the graph.
For graphs with a 1- or 2-vertex cut,
the Hamiltonicity also plays a central role.
We also deal with spanning trees satisfying  
this property  restricted to
paths arising from 
fundamental cycles.
The cycle space of a graph can be generated by  
the fundamental cycles of any spanning tree,
and Tutte showed, that for a 3-connected graph, it 
can be generated by nonseparating cycles.
We are also interested in the existence of a  
fundamental basis consisting of 
nonseparating cycles.


\end{abstract}

\begin{keyword}
nonseparating path; spanning tree; nonseparating fundamental cycle.  
\end{keyword}

\end{frontmatter}



\section{Introduction}
\label{sec:intro}

In this paper every graph is finite, simple and connected.
The \defi{vertex set} of a graph $G$ is denoted by $V(G)$ and its
\defi{edge set} by $E(G)$. 
For any subgraph $H$ of $G$, $G[H]$ is the subgraph of $G$ induced 
by $V(H)$, and $G - H$ is the subgraph of $G$ induced by 
$V(G) \setminus V(H)$.
We say $G$ is {\em $k$-connected} if $|V(G)| > k$ and, for every 
set $X \subseteq V(G)$ with $|X| < k$, $G - X$ is connected.
By convention, both the \defi{null graph} (the graph with no vertices, and hence no edges)
and the \defi{trivial graph} (any graph with just one vertex) are $0$-connected and $1$-connected,
but are not $k$-connected for any $k > 1$.

If $G$ is a graph and $P$ a path in $G$, we say that $P$ is a 
\defi{nonseparating (separating) path} if $G-P$ is connected
(respectively, disconnected). 
A path $P$ between $u$ and $v$ is called a {\em $uv$-path}.

Tutte~\cite{Tu63} proved that, for every $3$-connected graph $G$ and 
vertices $u$ and $v$, there exists a nonseparating $uv$-path.
In 1975, Lov\'asz~\cite{Lo75} made the following conjecture which is 
related to this result of Tutte.

\begin{conj}\label{conj:lovasz}
For every positive integer $k$, 
there exists a positive integer $f(k)$ such that, 
for every $f(k)$-connected graph $G$ and vertices $u$
and $v$, there exists a $uv$-path $P$ such that $G-P$ 
is $k$-connected.
\end{conj}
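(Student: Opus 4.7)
The plan is to attempt the conjecture by induction on $k$, with the base case $k=1$ handled by Tutte's theorem (taking $f(1)=3$). For the inductive step, assume the conjecture holds for some $k$ and try to show a suitable $f(k+1)$ exists. Given a graph $G$ that is $f(k+1)$-connected together with vertices $u,v$, the goal is to produce a $uv$-path $P$ such that $G-P$ is $(k+1)$-connected.

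First, by the inductive hypothesis (assuming $f(k+1)\ge f(k)$), I would select a $uv$-path $P_0$ with $G-P_0$ being $k$-connected. The central task is then to \emph{upgrade} $P_0$ to a path $P$ whose removal leaves no cut of size $k$. A natural strategy is to choose $P$ minimizing a well-chosen parameter, for example the lexicographic pair (length of $P$, number of $k$-vertex cuts of $G-P$), and argue that if this minimum is positive then a local rerouting of $P$ around a smallest cut $S$ of $G-P$ strictly decreases the parameter.

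To carry out such a rerouting, one needs a rich supply of internally disjoint paths joining the two sides of $G-P-S$ while avoiding $P$. This is where the assumed high connectivity of $G$ is exploited: by Mader-type density and linkage results, a highly connected graph forces many vertex-disjoint attachments between any two large vertex sets, which should supply the flexibility to reroute. Concretely, I would try to show that if $f(k+1)$ is taken (say) polynomial in $k$, then each side of $S$ in $G-P-S$ either is small enough that $S$ lies in a bounded neighborhood through which we can reroute $P$, or is itself connected to $P$ in many places, enabling a rerouting that eliminates $S$ without creating a new $k$-cut adjacent to the detour.

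The main obstacle --- and the reason the conjecture remains open in full generality --- is the entanglement between $P$ and multiple simultaneously obstructing cuts: a rerouting that eliminates one cut can easily create another elsewhere, and no monovariant is known that tames this interaction uniformly in $k$. Given that even the transition $k=1 \to k=2$ has demanded substantial combinatorial machinery in the literature, the realistic expectation for this plan is a quantitative partial result or a special case (for instance, graphs excluding certain minors) rather than a full proof; the crux is to identify an invariant of the pair $(G,P)$ that genuinely decreases under the proposed local rerouting.
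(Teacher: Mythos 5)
The statement you are trying to prove is Conjecture~\ref{conj:lovasz}, Lov\'asz's 1975 conjecture, which the paper does not prove and explicitly records as open for $k \ge 3$; only $f(1)=3$ (Tutte) and the case $k=2$ (Chen--Gould--Yu, Kriesell, and later Kawarabayashi--Lee--Yu) are known. So there is no ``paper's own proof'' to compare against, and your proposal, by your own admission in its final paragraph, does not close the argument either: it is a plan, not a proof.

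The concrete gap is exactly where you locate it, but it is worth naming precisely why the proposed mechanism fails rather than merely being ``hard.'' Your upgrade step minimizes the lexicographic pair (length of $P$, number of $k$-vertex cuts of $G-P$) and asserts that a local rerouting around a smallest cut $S$ of $G-P$ strictly decreases this parameter. Neither coordinate is controlled: a detour around $S$ generically \emph{lengthens} $P$, so the first coordinate increases; and the detour absorbs new vertices into $P$, which can create fresh $k$-cuts in $G-P$ far from $S$, so the second coordinate need not decrease even when the first is held fixed. Without a monovariant the ``minimize and derive a contradiction'' template simply does not launch. Moreover, the induction-on-$k$ framing is not how the known partial results were obtained: the $k=2$ proofs do not bootstrap from Tutte's $k=1$ theorem by rerouting, but use structural arguments tailored to $2$-connectivity, which is evidence that the inductive step you propose is not merely unfinished but is the wrong axis of attack. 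As written, the proposal establishes nothing beyond the already-known base case, and the statement remains a conjecture.
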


It is easy to see that $f(1) \geq 3$.
So, Tutte's result implies that $f(1) = 3$. 
Chen, Gould, and Yu~\cite{ChGo03}, 
and independently Kriesell~\cite{Kr01}, proved
that $f(2) = 5$. Recently, Kawarabayashi, Lee, and Yu~\cite{KaLe05}
proved that $f(2)=4$ except for double wheels. 
The conjecture is open for $k \geq 3$.

Some related questions have been settled yielding new conjectures. 
One of them is the following due to 
Kawarabayashi and Ozeki~\cite{KaOz11}.

\begin{conj}\label{conj:kawa}
For all positive integers $k$ and $\ell$, 
there exists a positive integer $g(k,\ell)$ such that 
the following holds. 
For every $g(k,\ell)$-connected graph $G$ and vertices $u$ and $v$, 
there exist internally disjoint $uv$-paths $P_1, \ldots, P_\ell$
such that $G - \bigcup_{i=1}^\ell P_i$ is $k$-connected.
\end{conj}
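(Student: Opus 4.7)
The plan is to prove Conjecture~\ref{conj:kawa} by induction on $\ell$, using Conjecture~\ref{conj:lovasz} as a black box. The base case $\ell = 1$ is precisely Conjecture~\ref{conj:lovasz}, giving $g(k, 1) = f(k)$.

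For the inductive step, suppose $g(k, \ell - 1)$ is known. The natural attempt is to set $g(k, \ell) := f(g(k, \ell - 1)) + 2$. Given a $g(k, \ell)$-connected graph $G$ with distinguished vertices $u, v$, first observe that $G - \{u, v\}$ is $f(g(k, \ell - 1))$-connected. Choose distinct neighbors $u'$ of $u$ and $v'$ of $v$ in $G - \{u, v\}$. Apply Conjecture~\ref{conj:lovasz} inside $G - \{u, v\}$ with parameter $g(k, \ell - 1)$ to obtain a $u'v'$-path $Q$ such that $(G - \{u, v\}) - V(Q)$ is $g(k, \ell - 1)$-connected. Let $P_1$ be the $uv$-path formed by prepending the edge $uu'$ and appending the edge $v'v$ to $Q$. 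Then, applying the induction hypothesis inside $G - (V(P_1) \setminus \{u, v\}) = G - V(Q)$ with the same endpoints $u, v$, we should extract $\ell - 1$ further internally disjoint $uv$-paths $P_2, \ldots, P_\ell$ such that $G - \bigcup_{i=1}^\ell V(P_i)$ is $k$-connected.

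The main obstacle lies in verifying that $G - V(Q)$ really is $g(k, \ell - 1)$-connected, so that the induction applies. Set $H = G - V(Q)$; we know $H - \{u, v\}$ is $g(k, \ell - 1)$-connected by construction. A cut set $S$ of $H$ with $|S| < g(k, \ell - 1)$ cannot be a cut in $H - \{u, v\}$, so it must separate $u$ or $v$ off as an isolated component, meaning $N_H(u) \subseteq S$ or $N_H(v) \subseteq S$. Thus the induction goes through exactly when $u$ and $v$ retain at least $g(k, \ell - 1)$ neighbors after the deletion of $V(Q)$. In the worst case, however, $Q$ might traverse many vertices of $N_G(u) \cup N_G(v)$, potentially stranding an endpoint. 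Overcoming this seems to require a strengthened \emph{avoidance} form of Conjecture~\ref{conj:lovasz} — one guaranteeing a nonseparating path that avoids a prescribed small set — or a gadget construction in which $u$ and $v$ are replaced by sufficiently large cliques so that no single path can absorb too many of their neighbors.

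A more global alternative would be to select all $\ell$ paths at once. By Menger's theorem, a $g(k, \ell)$-connected graph already admits $g(k, \ell)$ internally disjoint $uv$-paths, and one could try to choose $\ell$ of them (possibly after local rerouting) so that their joint removal preserves $k$-connectivity. Ultimately, I expect the deepest difficulty to be essentially inherited from Conjecture~\ref{conj:lovasz} itself: its case $k \ge 3$ remains open, so any substantial progress on Conjecture~\ref{conj:kawa} for $k \ge 3$ would almost certainly entail progress on Conjecture~\ref{conj:lovasz} as well.
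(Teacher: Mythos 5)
This statement is an open conjecture of Kawarabayashi and Ozeki; the paper does not prove it and only records the known partial results $g(1,\ell)=2\ell+1$ and $g(2,\ell)\le 3\ell+2$. Your attempt is therefore not being compared against a proof in the paper, and it does not itself constitute one. The central problem is that your entire induction uses Conjecture~\ref{conj:lovasz} as a black box, and that conjecture is itself open for $k\ge 3$. Worse, your recursion $g(k,\ell):=f(g(k,\ell-1))+2$ needs $f$ evaluated at large arguments: already for $k=1$, $\ell=2$ you would need $f(3)$ (since $g(1,1)=f(1)=3$), and $f$ is known only at $1$ and $2$. So even granting every step, your argument could not recover the cases $k=1,2$ that Kawarabayashi and Ozeki actually established; their proof must proceed differently (they choose the $\ell$ paths with a global argument tailored to small $k$, not by iterating the $\ell=1$ case).

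Beyond the conditional nature of the argument, the inductive step has the genuine gap you yourself identify, and it is fatal as written: Conjecture~\ref{conj:lovasz} applied in $G-\{u,v\}$ gives no control over how many neighbors of $u$ or $v$ the path $Q$ consumes, so $G-V(Q)$ may fail to be $g(k,\ell-1)$-connected at the endpoints, and the induction hypothesis cannot be invoked. The ``avoidance'' strengthening you propose (a nonseparating path avoiding a prescribed small set) is not a known consequence of Conjecture~\ref{conj:lovasz} and would itself need proof; the clique-gadget idea also changes the connectivity bookkeeping in ways you have not verified. In short: the statement is open, your reduction is conditional on another open conjecture, and the one unconditional step you need (the connectivity of $G-V(Q)$) is precisely the step that fails.
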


For  $\ell=1$  this conjecture  corresponds  to  Lovász  Conjecture.
Kawarabayashi and  Ozeki showed that  $g(1,\ell)=2\ell+1$ and $g(2,\ell)
\leq 3\ell+2$. 

Other related question was raised by Hong and Lai~\cite{HoLa13}, 
who considered the problem of connecting a subset of vertices  
by a tree instead of just two  vertices as in Tutte's theorem.
They made the conjecture below.

\begin{conj}\label{conj:hong}
For all positive integers $k$ and $r$, 
there exists a positive integer $h(k,r)$ such that,
for every $h(k,r)$-connected graph $G$ and 
subset $X$ with $r$ vertices,
there exists a tree $T$ connecting $X$ 
such that $G-T$ is $k$-connected.
\end{conj}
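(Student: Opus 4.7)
The natural plan is induction on $r$, with the base case $r=2$ coinciding exactly with Lov\'asz's Conjecture~\ref{conj:lovasz}, so in particular $h(k,2) \geq f(k)$ and $h(1,2)=3$ by Tutte's theorem. For the inductive step, suppose $h(k,r-1)$ is known, and we wish to produce a tree for a prescribed set $X$ of $r$ vertices in an $h(k,r)$-connected graph $G$. I would fix some $u \in X$ and apply the inductive hypothesis to $X \setminus \{u\}$, choosing $h(k,r)$ large enough that $G$ is certainly $h(k',r-1)$-connected for a suitably inflated parameter $k'$; this yields a tree $T_0$ connecting $X \setminus \{u\}$ with $G - T_0$ at least $k'$-connected. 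It remains to splice $u$ into $T_0$.

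To splice in $u$, I would contract $T_0$ in $G$ to a single vertex $w$, obtaining a graph $G'$, and look for a $uw$-path $Q$ in $G'$ whose removal leaves $G'$ at least $k$-connected. This is exactly the setting of Lov\'asz's conjecture applied to $G'$, and it uses its truth (or, for small $k$, the proved cases due to Tutte and to Chen, Gould, and Yu, or Kriesell). Lifting $Q$ back from $G'$ to $G$ produces a $u$-to-$V(T_0)$ path whose union with $T_0$ is the desired tree $T$ connecting $X$.

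The hard part has two layers. The first is that the base case is itself open for $k \geq 3$, so any unconditional proof of Conjecture~\ref{conj:hong} would in particular resolve Conjecture~\ref{conj:lovasz}; the best one can hope for along these lines is a conditional reduction plus unconditional statements for $k \in \{1,2\}$ coming from the known cases of $f$. The second layer is quantitative: contracting $T_0$ to a single vertex can dramatically change connectivity, and one needs to guarantee that $G/T_0$ retains enough connectivity for the Lov\'asz-type step to apply in $G'$, even though $T_0$ can traverse large parts of $G$. Controlling this is the technical core and dictates how large $h(k,r)$ must be.

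A variant worth pursuing in parallel is to replace the single-path splicing by an $\ell$-path splicing in the spirit of Kawarabayashi and Ozeki's Conjecture~\ref{conj:kawa}, using $g(k,\ell)$ at each induction step when the current subtree $T_0$ has $\ell$ leaves; this might yield better constants and avoid some of the contraction losses. Any bound produced by the naive iterated extension would grow rapidly with $r$, and I would expect that obtaining sharp values of $h(k,r)$, and especially handling $k \geq 3$, requires more global tree arguments rather than a purely inductive attack vertex-by-vertex.
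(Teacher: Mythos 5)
This statement is Conjecture~\ref{conj:hong}, due to Hong and Lai; the paper does not prove it and only quotes the known partial results $h(1,r)=r+1$ and $h(2,r)\leq 2r+1$. So there is no ``paper proof'' to compare against, and your proposal is not a proof either --- as you yourself note, your base case and your splicing step both invoke Lov\'asz's Conjecture~\ref{conj:lovasz}, which is open for $k\geq 3$. What you have is a conditional reduction of Conjecture~\ref{conj:hong} to Conjecture~\ref{conj:lovasz}, plus unconditional statements for $k\in\{1,2\}$ via the known values $f(1)=3$ and $f(2)=5$. That is a reasonable observation, but it cannot be spliced in as a proof of the conjecture.

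On the merits of the reduction itself: it is essentially sound, and the technical worry you single out (loss of connectivity under contraction of $T_0$) is less serious than you suggest. Since $G/T_0$ minus the contracted vertex $w$ is exactly $G-T_0$, which the inductive hypothesis makes $k'$-connected, and since $w$ has degree at least $\min(h(k,r),\,|V(G-T_0)|)$ in $G/T_0$ (the neighbourhood of $V(T_0)$ is either all of $G-T_0$ or a vertex cut of $G$), the contracted graph inherits $k'$-connectivity for free; taking $k'=f(k)$ lets you apply the Lov\'asz-type step directly, and $G/T_0 - Q = G - T$ as required. This gives the bound $h(k,r)\leq h(f(k),r-1)$, hence an iterated-$f$ bound growing in $r$; note that this is markedly worse than Hong and Lai's linear bounds $h(1,r)=r+1$ and $h(2,r)\leq 2r+1$, which indicates that their arguments are not this vertex-by-vertex induction. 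The genuine gap, then, is twofold: the result remains conjectural for $k\geq 3$ because its path case is, and even for $k\leq 2$ your scheme does not recover the stated quantitative results.
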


Hong and Lai proved that $h(1,r) = r + 1$ and $h(2,r)\leq 2r +1$.

By Tutte, in a $3$-connected graph, every pair of vertices is connected 
by a nonseparating path.
\textit{Is it possible to get a spanning tree where all paths
 are nonseparating?} 
Inspired by Tutte's result on nonseparating paths, 
we call \defi{Tutte tree} any spanning tree of a graph 
such that every path in the tree is nonseparating.
In this paper we deal with the following question.

\begin{ques}\label{ques:spantree}
When does a graph have a Tutte tree?
\end{ques}

For planar graphs we prove the following.
\begin{thm}\label{thm:tuttetree}
A planar graph  has a Tutte tree if and only if it is
Hamiltonian or it has a spanning tree whose leaves induce a triangle.
\end{thm}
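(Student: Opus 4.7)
The forward direction (``if'') splits into two subcases, neither using planarity. If $G$ is Hamiltonian with a Hamiltonian cycle $C$, take $T = C - e$ for any edge $e = uv$ of $C$; for any path $P \subseteq T$, the set $V(G) \setminus V(P)$ consists of at most two arcs of $C$ joined by $e$, so $G - V(P)$ is connected. If instead $G$ has a spanning tree $T$ whose leaves $\{a, b, c\}$ induce a triangle, then $T$ is a subdivision of $K_{1,3}$ with a unique branching vertex $x$ and three arms $Q_a, Q_b, Q_c$. For any path $P \subseteq T$ at most two of $\{a, b, c\}$ lie in $V(P)$ (a leaf in $V(P)$ must be an endpoint of $P$); a direct case analysis on the position of $P$ in $T$ shows that each component of $T - V(P)$ contains a vertex of $\{a, b, c\} \setminus V(P)$, and the triangle edges $ab$, $bc$, $ca$ of $G$ reconnect those components inside $G - V(P)$.

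For the converse, suppose $G$ is planar with some Tutte tree, and choose a Tutte tree $T$ of $G$ with the minimum number of leaves. If $T$ has exactly two leaves, then $T$ is a Hamiltonian path $v_1 v_2 \cdots v_n$; applying the Tutte condition to the interior $P = v_2 \cdots v_{n-1}$ forces $v_1 v_n \in E(G)$, so $T + v_1 v_n$ is a Hamiltonian cycle. If $T$ has exactly three leaves $a, b, c$, we show $\{a, b, c\}$ induces a triangle in $G$, whence $T$ itself is the required spanning tree. If not, say $ab \notin E(G)$: applying the Tutte condition to the path in $T$ from the neighbor of $a$ to $b$ (and symmetrically from $a$ to the neighbor of $b$) yields chord edges of $G$ from $a$ and from $b$ to the third arm $Q_c$ of $T$. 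Using a plane embedding of $G$ and the cyclic order of these chords around the branching vertex $x$ of $T$, we construct a Tutte tree of $G$ with at most two leaves, contradicting the minimality assumption. If $T$ has at least four leaves, the same idea applies: in a plane embedding, two leaves consecutive in the cyclic order around $T$ can be merged by rerouting along a suitable chord, producing a Tutte tree with strictly fewer leaves and again contradicting minimality.

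The main obstacle throughout the converse direction is the rerouting step when $T$ has three or more leaves: one must verify that the modified spanning tree is still a Tutte tree, i.e., that every path in it remains nonseparating in $G$. Planarity is essential here, since without it $K_n$ for $n \ge 5$ already admits the Tutte tree $K_{1,n-1}$, whose $n-1$ leaves form a triangle only when $n = 4$; the rerouting argument must therefore exploit the cyclic constraints that a plane embedding imposes on the chord structure of $G$ relative to $T$, and verifying that the rerouting preserves nonseparability of \emph{every} path in the new tree is where the bulk of the case analysis is expected to lie.
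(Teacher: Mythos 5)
Your sufficiency argument (a Hamiltonian cycle minus an edge, or a spider whose three leaves are mutually adjacent) is correct and is essentially the paper's $(\Leftarrow)$ direction. The problem is the necessity direction, where your entire strategy rests on a rerouting step that you never carry out: you assert that when a minimum-leaf Tutte tree $T$ has three leaves missing a triangle edge, or has at least four leaves, the chords forced by the Tutte condition together with the cyclic order of a plane embedding let one build a Tutte tree with strictly fewer leaves --- and you yourself flag that verifying the modified tree is still a Tutte tree is ``where the bulk of the case analysis is expected to lie.'' That verification is the whole content of the direction; as written the proof has a hole exactly where the theorem's difficulty sits. It is also far from clear that the rerouting can be made to work: a local edge swap changes many tree paths at once, and nothing in your sketch prevents the new tree from containing a long path whose removal disconnects $G$.

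The paper takes a different and complete route that avoids any extremal choice or rerouting. It proves (Lemma~\ref{lem:tutte}) that for \emph{any} Tutte tree $T$ of a planar graph, $G[L(T)]$ is a clique: given two nonadjacent leaves $u$ and $v$, fix a plane drawing, form for each pair $x,y\in N(u)$ the cycle $C_{xy}=xTy\cup\{yu,ux\}$, choose the innermost such cycle whose closed disk contains $v$, and show that the corresponding tree path $u_1Tu_2$ separates $u$ from $v$, contradicting the Tutte property. Planarity then bounds the clique: four pairwise adjacent leaves together with the contracted interior of $T$ would give a $K_5$ minor, so $L(T)$ has two or three vertices; two leaves yield a Hamiltonian cycle $T+e$, three yield the triangle. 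If you want to repair your argument, replacing the minimality-plus-rerouting scheme with this direct adjacency lemma is the natural fix: it applies to the given Tutte tree, and the only planarity inputs are the Jordan curve theorem and the exclusion of $K_5$ minors.
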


We denote the set of leaves in a tree $T$ by $L(T)$. 
We prove the theorem below.

\begin{thm}\label{thm:3contt}
Let $G$ be a graph and $T$ be a spanning tree. 
If $G[L(T)]$ is $3$-connected, then  $T$ is a Tutte tree.
\end{thm}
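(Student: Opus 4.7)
The plan is to fix an arbitrary path $P$ in $T$ and show that $G-P$ is connected, by exhibiting a connected subgraph of $G-P$ that every vertex of $V(G)\setminus V(P)$ can reach.

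First I would note that a $3$-connected graph has at least $4$ vertices, so $|L(T)|\ge 4$. Because $P$ is a path in $T$, it can contain at most two leaves of $T$ (its endpoints, if they happen to be leaves), so $|L(T)\setminus V(P)|\ge 2$. Since $G[L(T)]$ is $3$-connected, deleting at most two vertices from it leaves a connected graph; thus $G[L(T)\setminus V(P)]$ is connected, and it sits inside $G-P$. This will play the role of a ``highway'' in $G-P$ that all remaining vertices must be able to reach.

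The heart of the argument is the following structural claim: every connected component $C$ of the forest $T-V(P)$ contains a leaf of $T$. To see this, observe that since $T$ is a tree and $V(P)$ induces a subpath, a simple edge-count shows that exactly one edge of $T$ joins $C$ to $V(P)$; call its endpoint in $C$ the attachment vertex $c_C$. Every vertex of $C$ other than $c_C$ therefore has all of its $T$-neighbors inside $C$, so a leaf of the subtree $C$ different from $c_C$ is automatically a leaf of $T$. If $|C|=1$ then $C=\{c_C\}$ and $c_C$ itself has $T$-degree one and is a leaf; if $|C|\ge 2$ then the subtree $C$ has at least two leaves, so at least one of them is distinct from $c_C$ and is a leaf of $T$ inside $C$.

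Combining the two ingredients finishes the proof: every vertex $v\in V(G)\setminus V(P)$ lies in some component of $T-V(P)$, which (by the claim) contains a leaf $\ell_v\in L(T)\setminus V(P)$, and $v$ is joined to $\ell_v$ by a path in $T-V(P)\subseteq G-P$; and all such leaves lie in the connected subgraph $G[L(T)\setminus V(P)]$ of $G-P$. Therefore $G-P$ is connected, and since $P$ was arbitrary, $T$ is a Tutte tree. I do not expect a real obstacle here; the only point that needs a moment of care is the verification that each component of $T-V(P)$ is attached to $V(P)$ by a single $T$-edge (hence does contain a leaf of $T$), together with the sanity check that the degenerate case $V(P)=V(T)$ cannot occur because it would force $T$ to be a path, contradicting $|L(T)|\ge 4$.
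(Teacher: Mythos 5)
Your proof is correct and follows essentially the same approach as the paper's: the path $P$ meets at most two leaves, so $G[L(T)]-P$ stays connected, and every component of $G-P$ contains a leaf of $T$ reachable through $T-V(P)$. The only difference is that you carefully justify the step the paper states with a bare ``Note that'' (each component of $T-V(P)$ attaches to $P$ by a single tree edge and hence contains a leaf of $T$), which is a welcome addition but not a different argument.
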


A cycle $C$ is a \defi{nonseparating (separating) cycle}
if $G-C$ is connected (respectively, disconnected). 
We adopt the convention that a Hamiltonian cycle is nonseparating.
Let~$T$ be a spanning tree of $G$. 
For every $e \in E(G) \setminus E(T)$, there is a unique cycle $C_e$
in~$T+e$.
These cycles $C_e$ are called \defi{fundamental cycles} (of $G$) with 
respect to $T$. 

Tutte~\cite{Tu63} proved that the cycle space of a 3-connected graph is
generated by its nonseparating induced cycles.
Once more, inspired by a result of Tutte we say that a spanning tree $T$ 
of a graph is a \defi{fundamental Tutte tree} if any fundamental cycle 
with respect to $T$ is nonseparating.

We present a question concerning fundamental Tutte trees similar
to Question~\ref{ques:spantree}.

\begin{ques}\label{ques:fundspantree}
When does a graph have a fundamental Tutte tree?
\end{ques}

This question corresponds to asking if there is a fundamental basis of
the cycle space consisting of nonseparating cycles. For the case of
planar graphs, we prove the following.

\begin{thm}\label{thm:fundtuttetree}
Let $G$ be a planar graph and $T$ be a spanning tree.
If $T$ is a fundamental Tutte tree, then in any plane drawing of $G$ 
all leaves are in the same face.
\end{thm}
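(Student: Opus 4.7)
The plan is to fix a plane drawing of $G$, view it on the sphere, and work in the \emph{tree-face disk model}: since $T$ is a tree, its complement on the sphere is a topological open disk $D$, with closure a closed disk whose boundary $\partial D$ is the closed walk obtained by traversing each tree edge of $T$ twice. Each leaf of $T$ appears exactly once on $\partial D$, while each internal vertex $v$ appears $\deg_T(v)$ times. The non-tree edges of $G$ become pairwise non-crossing chords of $D$, and the faces of $G$ are precisely the cells of this chord arrangement; in particular, each face of $G$ is uniquely determined by which side of every chord it occupies.

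I would first establish the following. For each non-tree edge $e$, the cycle $C_e$ divides $D$ into two subregions; since in a plane graph every path joining vertices in different subregions must use a vertex of $V(C_e)$, the hypothesis that $G-V(C_e)$ is connected forces one of the two subregions, the \emph{empty side} of $e$, to contain no vertex of $V(G)\setminus V(C_e)$; the other will be called the \emph{full side}. If $v$ is a leaf of $T$ that is not an endpoint of $e$, then $v\notin V(C_e)$ (a leaf of $T$ is never an internal vertex of a tree path), so $v$ lies strictly in the full arc of $\partial D$, and therefore every face-corner of $G$ at $v$ lies in the full side of $e$.

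The heart of the proof is a local lemma at each leaf. Fix a leaf $v$ with tree edge $vp$ and non-tree edges $vu_1,\dots,vu_k$ in cyclic order around $v$, and label the face-corners at $v$ as $F_0,\dots,F_k$, with $F_i$ between $vu_i$ and $vu_{i+1}$ (and the convention $vu_0=vu_{k+1}=vp$). At $v$, the chord $vu_i$ splits the neighborhood into the \emph{small side} $\{F_0,\dots,F_{i-1}\}$ and the \emph{large side} $\{F_i,\dots,F_k\}$. I claim that for every pair $i<j$, the full side of $vu_i$ at $v$ cannot be small while the full side of $vu_j$ at $v$ is large. Indeed, in that hypothetical case the edge $vu_j$ would leave $v$ into the empty side of $C_{vu_i}$; since edges in a plane drawing cannot cross $C_{vu_i}$, its other endpoint $u_j$ would lie in $V(C_{vu_i})$, and as $u_j\notin\{v,p,u_i\}$ it would be an internal vertex of the tree path $P_{pu_i}$. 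A symmetric argument places $u_i$ as an internal vertex of $P_{pu_j}$. However, $u_j\in V(P_{pu_i})$ forces $P_{pu_j}$ to be the initial subpath of $P_{pu_i}$ ending at $u_j$, which does not contain the later vertex $u_i$, a contradiction.

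Consequently, the set $\{i:\text{the full side of }vu_i\text{ at }v\text{ is small}\}$ is upward-closed in $\{1,\dots,k\}$, and a short index calculation pinpoints a unique face-corner $F_{v^\star}$ at $v$ that lies in the full side of every chord incident to $v$. Combined with paragraph two, $F_{v^\star}$ lies in the full side of every chord of the arrangement, so by the uniqueness remark at the end of paragraph one it is a specific single cell of the arrangement. Since this characterization is independent of $v$, the face $F_{v^\star}$ is the same face $F^\star$ of $G$ for every leaf $v$, and hence all leaves of $T$ lie on $\partial F^\star$. The step I expect to be most delicate is the local lemma: carefully using planarity to turn the configuration at $v$ into the algebraic statement $u_j\in V(C_{vu_i})$, and then extracting the contradiction from the uniqueness of tree paths in $T$.
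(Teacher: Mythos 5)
Your argument is correct in its essentials, but it takes a genuinely different route from the paper's. The paper's proof is a short deduction from two quoted results: if two leaves $u,v$ lie in no common face, a separation theorem of Tutte for plane graphs yields a cycle separating $u$ from $v$; since the fundamental cycles of $T$ form a basis of the cycle space and ``separates $u$ from $v$ in the drawing'' is a $\mathbb{Z}_2$-linear condition, some fundamental cycle separates them, contradicting the hypothesis. Your proof is instead self-contained and constructive: cutting the sphere along $T$ turns the non-tree edges into non-crossing chords of a disk, the nonseparating hypothesis forces each chord to have a vertex-free side, and the rotation analysis at each leaf isolates the unique corner lying on the full side of every chord incident to that leaf; since distinct cells of a non-crossing chord arrangement are separated by some chord, at most one cell can be on the full side of everything, so all leaves meet that one face. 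I checked the local lemma (the step you flag as delicate) and it is sound: the monotonicity claim, the deduction $u_j\in V(C_{vu_i})$ from planarity plus emptiness, and the contradiction via uniqueness of tree paths all go through. What your approach buys: it exhibits the common face explicitly, it avoids both the quoted planar-separation theorem and the (unstated in the paper) linear-algebra step behind ``some fundamental cycle separates them,'' and it sidesteps a Helly-type subtlety in the paper's opening move, namely that the negation of ``all leaves lie on one face'' does not immediately produce \emph{two} leaves with \emph{no} common face; it also needs no connectivity hypothesis, whereas the paper dispatches the cut-vertex case separately via Lemma~\ref{lem:ftt1c}. One edge case to patch: if some fundamental cycle $C_e$ is Hamiltonian, both sides of the chord $e$ are vertex-free and your ``full side'' is undefined; but then $P_e=T$ is a Hamiltonian path, $T$ has exactly two leaves, and these are the two ends of the edge $e$, so they trivially lie on a common face --- handle this case separately (or note that it cannot coexist with the rest of your argument's needs).
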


It is easy to see that if $T$ is a Tutte tree of $G$, then $T$ is a
fundamental Tutte tree.  However, as we shall see, not every graph
with a fundamental Tutte tree has a Tutte tree.

This paper is structured as follows. 
In Section~\ref{sec:tuttetree}, we dive into Tutte trees.  
We analyze Tutte trees in planar graphs and prove 
Theorem~\ref{thm:tuttetree}.
The structure of graphs with 2-vertex cut having a Tutte
tree is investigated.  
It is shown that the problem of deciding whether a 3-connected
graph has a Tutte tree is NP-complete and some examples of graphs
with no Tutte tree are presented.
Theorem~\ref{thm:3contt} is proved and examples showing 
that the sufficient condition of this theorem is not a necessary 
one are exhibited.
Next, in Section~\ref{sec:fundtuttetree}, the fundamental Tutte 
trees are contemplated.
The structure of graphs having a fundamental Tutte tree  
with a 1-vertex cut or a 2-vertex cut are explored. 
Theorem~\ref{thm:fundtuttetree} is proved and an example of a graph 
with a fundamental Tutte tree and no Tutte tree is exhibited.
We conclude this section presenting a graph with 
no fundamental Tutte tree.
Finally, in Section~\ref{sec:cr}, we present some concluding 
remarks and open questions.


\section{Tutte trees}
\label{sec:tuttetree}

Note that $3$-connectedness is a sufficient condition for a graph to
have a nonseparating path between any two vertices. However, it is
not a necessary condition. A cycle of length at least three, which is
$2$-connected but not $3$-connected, has a nonseparating path between
any two vertices. The following observation is trivial, but important.

\begin{obs}\label{obs:tutteconect}
Every graph $G$ with a Tutte tree has connectivity at least $2$.
\end{obs}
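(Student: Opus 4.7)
The plan is immediate: I would invoke the Tutte-tree hypothesis on the trivial one-vertex paths of $T$. Concretely, for each vertex $v \in V(G)$, the subgraph of $T$ consisting solely of $v$ is a (length-zero) path in $T$; since every path in $T$ is nonseparating, it follows that $G - v$ is connected. As $v$ is arbitrary, $G$ has no cut vertex, so its vertex-connectivity is at least $2$.

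There is essentially no step and no real obstacle here, which matches the authors' wording that the observation is trivial. The only subtlety, and the nearest thing to an obstacle, is confirming that single-vertex subgraphs are counted as paths in the definition of a Tutte tree. Without this convention, $P_3$ would refute the observation: its unique spanning tree has no separating path of length $\geq 1$, yet $P_3$ has a cut vertex. The reading we use is consistent with Theorem \ref{thm:tuttetree}, which also rules out $P_3$.

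As a fallback, if one insisted that paths have length at least $1$, I would argue as follows. A cut vertex $v$ of $G$ is automatically a cut vertex of any spanning tree $T$ (a path in $T - v$ between different components of $G - v$ would be a path in $G - v$ joining them). Hence $v$ has at least two neighbors $u_1, u_2$ in $T$, and one can choose them in distinct components $A, B$ of $G - v$. The path $P = u_1 v u_2$ in $T$ then has no edge between $A \setminus \{u_1\}$ and $B \setminus \{u_2\}$ in $G - P$, so $G - P$ is disconnected whenever both remainders are nonempty. The only degenerate cases are those where some component of $G - v$ is a singleton, which either reduces to $G = P_3$ or is handled by extending $P$ by one more edge into a nontrivial component. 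This route is far messier, reinforcing that the single-vertex-path interpretation is the intended one.
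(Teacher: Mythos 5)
The paper offers no written proof of this observation (it is simply declared trivial), and your primary argument is exactly the intended one: a single vertex $v$ is a path of length zero in $T$, so the Tutte-tree hypothesis forces $G-v$ to be connected for every $v$, hence $G$ has no cut vertex. Your discussion of the convention is apt --- this reading is the standard one and is the only one under which the observation (and its use in the proof of Lemma~\ref{lem:tutte}) goes through; the only pedantic gap is that ``no cut vertex implies connectivity at least $2$'' tacitly assumes $|V(G)|\geq 3$, a degenerate case the paper also ignores.

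One correction to your fallback, though it does not affect the verdict: the step ``handled by extending $P$ by one more edge into a nontrivial component'' is wrong. If $G-v$ has components $A=\{u_1\}$ and a clique $B$, with $v$ adjacent to all of $B$, then every extension $u_1vu_2w\cdots$ of $P$ into $B$ leaves a connected clique as remainder, so no such extension is separating. The correct move in that degenerate case is to take a path that avoids $u_1$ altogether, e.g.\ the single edge $vu_2$ of $T$, whose removal isolates $u_1$ from the nonempty set $B\setminus\{u_2\}$.
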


In order to prove Theorem~\ref{thm:tuttetree} we first prove a lemma.

\begin{lem}\label{lem:tutte}
Let $G$ be a planar graph and $T$ be a spanning tree.
If $T$ is a Tutte tree then $G[L(T)]$ is a clique.
\end{lem}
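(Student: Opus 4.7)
The plan is to argue by contradiction. Suppose $u, v \in L(T)$ with $uv \notin E(G)$, and let $P$ be the unique $uv$-path in $T$, which I write as $P = u\, p_1\, p_2 \cdots p_{k-1}\, v$; since $uv \notin E(G) \supseteq E(T)$, we have $k \ge 2$. First I would dispose of the degenerate case $V(G) = V(P)$: here the sub-path $P' = p_1 p_2 \cdots p_{k-1}$ of $T$ satisfies $G - P' = G[\{u,v\}]$, which has no edges because $uv \notin E(G)$, directly contradicting the Tutte tree property applied to $P'$.

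Otherwise $V(G) \setminus V(P) \ne \emptyset$. Applying the Tutte tree hypothesis to $P'$ again gives that $G - P' = G[\{u,v\} \cup (V(G) \setminus V(P))]$ is connected; since $uv \notin E(G)$, this forces a $uv$-path $\rho$ in $G$ whose interior vertices lie in $V(G) \setminus V(P)$. Then $C := P \cup \rho$ is a cycle of $G$. Fix a plane embedding of $G$; in this embedding $C$ is a Jordan curve partitioning the plane into two open regions. Because edges of $G$ do not cross in the embedding, every edge of $G$ not in $E(C)$ lies inside exactly one of these two regions; consequently every component of the forest $T - V(P)$ whose vertex set is disjoint from $V(\rho)$ is contained in one of the two closed regions, and the components that do meet $V(\rho)$ touch $C$ only along the interior of $\rho$.

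The main step then combines the above with the Tutte tree hypothesis applied to $P$ itself, which gives that $G - P = G[V(G) \setminus V(P)]$ is connected. I would perform a case analysis based on where in $T$ and on which side of $C$ the non-tree neighbours of $u$ and of $v$ sit (same or different components of $T - V(P)$; same or opposite sides of $P$ in the embedding), together with the cyclic rotations of $G$ at $p_1$ and $p_{k-1}$. In each case the goal is to exhibit a tree path $Q$ in $T$, distinct from $P$ and typically running from $p_1$ or $p_{k-1}$ into a suitably chosen component of $T - V(P)$, whose vertex set $V(Q)$ separates $u$ from $v$ in $G$; this contradicts $T$ being a Tutte tree and completes the proof.

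The main obstacle is the case analysis itself: producing the right $Q$ in each sub-case and verifying that $V(Q)$ really disconnects $u$ from $v$ in $G$. Planarity is used decisively twice---once in the non-crossing of edges with the Jordan curve $C$, and once in the cyclic-rotation constraints at $p_1$ and $p_{k-1}$---and is genuinely needed: for $G = K_{3,3}$ with bipartition classes $\{a_1,a_2,a_3\}$ and $\{b_1,b_2,b_3\}$, the spanning tree with edges $\{a_1 b_1, a_1 b_2, a_1 b_3, b_1 a_2, b_1 a_3\}$ is easily checked to be a Tutte tree, yet its four leaves $\{a_2, a_3, b_2, b_3\}$ induce a $4$-cycle rather than a clique.
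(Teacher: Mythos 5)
Your write-up is a plan rather than a proof: the entire substance of the argument is deferred to the ``main step,'' where you announce a case analysis (over which components of $T - V(P)$ the non-tree neighbours of $u$ and $v$ fall into, which side of the Jordan curve $C = P \cup \rho$ they lie on, and the rotations at $p_1$ and $p_{k-1}$) that is supposed to produce a separating tree path $Q$ --- and you then identify carrying out that case analysis as ``the main obstacle.'' That obstacle is exactly where the lemma lives; nothing you have established before it (the degenerate case, the existence of $\rho$, the fact that each component of $T - V(P)$ sits on one side of $C$) brings you measurably closer to exhibiting $Q$. In particular, $\rho$ is an arbitrary $uv$-path in $G - P'$ with no relation to $T$, so the cycle $C$ you build is not one from which a tree path falls out naturally, and it is not clear the enumeration of sub-cases you describe is even finite or exhaustive as stated. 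So there is a genuine gap: the separating path is never constructed.

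For comparison, the paper avoids all of this with an extremal choice made locally at $u$ rather than along the $uv$-path of $T$. Since $u$ is a leaf and $v$ is a nonadjacent leaf, for any two neighbours $x,y$ of $u$ the cycle $C_{xy} = xTy \cup \{yu, ux\}$ avoids $v$ (a leaf cannot be an internal vertex of a tree path, and $v \neq x,y$ by nonadjacency). One then picks $u_1,u_2 \in N(u)$ so that the disk bounded by $C_{u_1u_2}$ contains $v$ and is inclusion-minimal with this property; minimality forces every neighbour of $u$ off the open disk, and planarity forbids edges crossing the Jordan curve, so $G - u_1Tu_2$ separates $u$ from $v$. If you want to salvage your approach, I would recommend replacing your cycle $C = P\cup\rho$ with this family of cycles $C_{xy}$ through $u$ and an extremal (innermost) choice among them; that is what eliminates the case analysis. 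Your $K_{3,3}$ example correctly illustrating that planarity is essential is a nice sanity check and is consistent with the paper's Figure of $K_{3,3}$, but it does not repair the missing core of the argument.
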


\begin{proof}
Suppose that $T$ is Tutte tree of $G$ and there exist two nonadjacent
leaves of $T$. We claim that there is a separating path in $T$.

Let $D$ be a plane drawing of $G$. 
Let $u$ and $v$ be two nonadjacent leaves of $T$. 
Since $G$ is $2$-connected, 
$u$ has at least two neighbors. 
Let $N(u)$ denote the set of neighbors of $u$ in $G$. 
For any two vertices $x, y \in N(u)$ there exists a unique 
path between them in~$T$, denoted by $xTy$ . 
Thus, $xTy \cup \{yu, ux\}$ is a cycle in $G$ denoted by $C_{xy}$, 
and it is represented by the Jordan curve $D[C_{xy}]$ in $D$.
As $v$ is not adjacent to $u$, we have that $x, y \neq v$, 
and as $v$ is a leaf, the cycle $C_{xy}$ does not contain $v$.
Without loss of generality, we may assume that $v$ is
inside the closed disk bounded by $D[C_{xy}]$.
Let $u_1, u_2 \in N(u)$ be such that: 
\begin{inparaenum}[(i)]
\item $v$ is inside the closed disk bounded by $D[C_{u_1u_2}]$, and  
\item if $C_{xy}$ is another cycle containing $v$ inside the 
      closed disk bounded by $D[C_{xy}]$ 
      then $D[C_{u_1u_2}]$ is contained in the closed disk bounded by $D[C_{xy}]$.
\end{inparaenum}

No vertex $u'\in N(u)$ is inside the closed
disk bounded by $D[C_{u_1u_2}]$. Indeed, suppose that there exists such a
$u'$. Since $D$ is a plane drawing of $G$, both, the path $D[u'Tu_1]$
and the path $D[u'Tu_2]$ are inside the closed disk bounded by
$D[C_{u_1u_2}]$. Thus one of $C_{u_1u'}$ or $C_{u'u_2}$ contains $v$ in
the interior of the Jordan curve that it induces, contradicting the
choice of~$C_{u_1u_2}$.

Also, no vertex in the interior of $D[C_{u_1u_2}]$ has as a neighbor a
vertex in the exterior. Therefore $G-u_1Tu_2$ has at least two different
components, one containing $u$ and another containing $v$. Therefore the
path $u_1Tu_2$ is separating.
\end{proof}

\begin{proof}[Proof of Theorem~\ref{thm:tuttetree}]
$(\Rightarrow)$ Let $T$ be a Tutte tree of a planar graph $G$.
By Lemma~\ref{lem:tutte}, $G[L(T)]$ is a clique. 
As $G$ is planar, $G[L(T)]$ is either an edge $e$ or a triangle, 
otherwise $G$ contains a $K_5$ minor.
In the first case, $T+e$ is a Hamiltonian cycle.

\noindent 
$(\Leftarrow)$ 
If $G$ has a Hamiltonian cycle $C$ and $e$ is an edge of $C$, then
$C-e$ is a Tutte tree. If $T$ is a spanning tree such that its leaves 
induce a triangle, then it is trivial to check that all paths in $T$ are nonseparating.
\end{proof}

\begin{cor}\label{cor:tuttehamil}
Let $G$ be a planar graph. If $G$ has a Tutte tree, then $G$ is \defi{traceable}, i.e., has a Hamiltonian path.
\end{cor}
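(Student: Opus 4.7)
The plan is to invoke Theorem~\ref{thm:tuttetree} and split into its two cases. Since $G$ is planar and has a Tutte tree, either $G$ is Hamiltonian, or $G$ has a spanning tree $T$ whose leaves induce a triangle. In the Hamiltonian case, deleting any edge of a Hamiltonian cycle immediately yields a Hamiltonian path, so $G$ is traceable.

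In the remaining case, let $a$, $b$, $c$ be the three leaves of $T$; by hypothesis they are pairwise adjacent in $G$. A standard degree-sum argument ($\sum_v(d(v)-2)=-2$ in a tree) shows that a tree with exactly three leaves has exactly one vertex of degree three and no vertex of larger degree. Hence $T$ is a ``spider'': three internally disjoint paths $P_a$, $P_b$, $P_c$ meeting at a common branch vertex $w$ and ending at $a$, $b$, $c$ respectively. Each $P_x$ has at least one edge, since $w$ has degree three in $T$ and is therefore distinct from every leaf.

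A Hamiltonian path can then be written down explicitly: start at $a$, traverse $P_a$ up to $w$, continue along $P_b$ from $w$ down to $b$, cross the triangle edge $bc$, and finally walk from $c$ along $P_c$ toward $w$, stopping at the neighbor of $w$ on $P_c$. The visited vertex set is $V(P_a)\cup V(P_b)\cup (V(P_c)\setminus\{w\})=V(T)=V(G)$, each vertex occurs exactly once (the only potential overlap, namely $w$, is visited only on the $P_a$-to-$P_b$ transition), and every consecutive pair is adjacent in $G$ via either an edge of $T$ or the triangle edge $bc$.

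There is essentially no obstacle here: once Theorem~\ref{thm:tuttetree} is in hand, the corollary reduces to the purely combinatorial observation that a three-leaved tree together with a triangle on its leaves contains a Hamiltonian path, and the spider decomposition of $T$ makes this transparent.
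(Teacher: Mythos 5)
Your proof is correct and follows essentially the same route as the paper: invoke Theorem~\ref{thm:tuttetree}, dispose of the Hamiltonian case immediately, and in the three-leaf case observe that the tree is a spider (homeomorphic to a $3$-star) and route a Hamiltonian path through one of the triangle edges on the leaves. The paper leaves the spider construction as ``not hard''; you have simply written out the details, correctly.
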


\begin{proof}
Let $T$ be a Tutte tree in $G$. By Theorem~\ref{thm:tuttetree}, the
tree $T$ cannot have more than three leaves.  If $T$ has only two
leaves, then $G$ is Hamiltonian, and hence traceable. If $T$ has three
leaves, then $T$ is homeomorphic to a $3$-star, and it is not hard to
find a Hamiltonian path in $G$ using one of the edges between two of the
leaves of~$T$.
\end{proof}

\begin{cor}\label{cor:tt4con}
Every $4$-connected planar graph has a Tutte tree.
\end{cor}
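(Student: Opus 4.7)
The plan is to deduce this corollary directly from Theorem~\ref{thm:tuttetree} by invoking the classical theorem of Tutte (1956) stating that every $4$-connected planar graph is Hamiltonian. Once Hamiltonicity is in hand, Theorem~\ref{thm:tuttetree} gives a Tutte tree essentially for free: removing any edge from a Hamiltonian cycle yields a spanning path, which is a spanning tree with exactly two leaves (trivially inducing a clique, namely an edge), and Theorem~\ref{thm:tuttetree} asserts that Hamiltonicity alone is sufficient for the existence of a Tutte tree.

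So the proof structure is: first cite Tutte's theorem to obtain a Hamiltonian cycle $C$ in the $4$-connected planar graph $G$; then appeal to the ``Hamiltonian'' half of the sufficient condition in Theorem~\ref{thm:tuttetree}. Alternatively, one may give the explicit construction already used in the $(\Leftarrow)$ direction of Theorem~\ref{thm:tuttetree}: pick any edge $e \in E(C)$, and observe that $C - e$ is a Hamiltonian path, hence a spanning tree, whose unique nontrivial paths are subpaths of $C$; each such subpath $P$ has complement $G - P$ which is either connected via the remaining portion of $C$ or, if $P$ spans $V(G)$, is empty (and by convention the removal leaves a trivial connected graph). In either case $P$ is nonseparating.

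The entire content of the corollary therefore reduces to quoting Tutte's Hamiltonicity theorem; there is no real obstacle, and no case analysis involving the ``leaves induce a triangle'' alternative is necessary. The only point worth remarking is that $4$-connectivity is used solely to import Tutte's theorem --- the preceding results in the paper do not themselves yield Hamiltonicity from connectivity assumptions, so the external reference is essential.
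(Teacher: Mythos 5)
Your proposal is correct and matches the paper's proof essentially verbatim: both cite Tutte's 1956 theorem that every $4$-connected planar graph is Hamiltonian and then observe that deleting an edge from a Hamiltonian cycle yields a spanning path that is a Tutte tree (equivalently, invoke the Hamiltonian half of the $(\Leftarrow)$ direction of Theorem~\ref{thm:tuttetree}). Nothing further is needed.
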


\begin{proof}
If $T$ is a spanning tree that results of removing an edge from a
Hamiltonian cycle of $G$, then $T$ is a Tutte tree. Since Tutte~\cite{Tu56} proved
that every $4$-connected planar graph is Hamiltonian, the corollary follows.
\end{proof}

One could try to prove that every $3$-connected planar graph which has a
Tutte tree with three leaves is Hamiltonian. However this is not
true. Consider for instance the Herschel graph
(Figure~\ref{fig:herschel1}), which is the smallest nonhamiltonian
\defi{polyhedral} (planar $3$-connected) graph, and replace two degree
three vertices by a triangle as in Figure~\ref{fig:herschel}. The
resulting graph is also planar and nonhamiltonian but has a Tutte tree
with three leaves.

\begin{figure}[h!]
\centering
\includegraphics[width=0.4\textwidth]{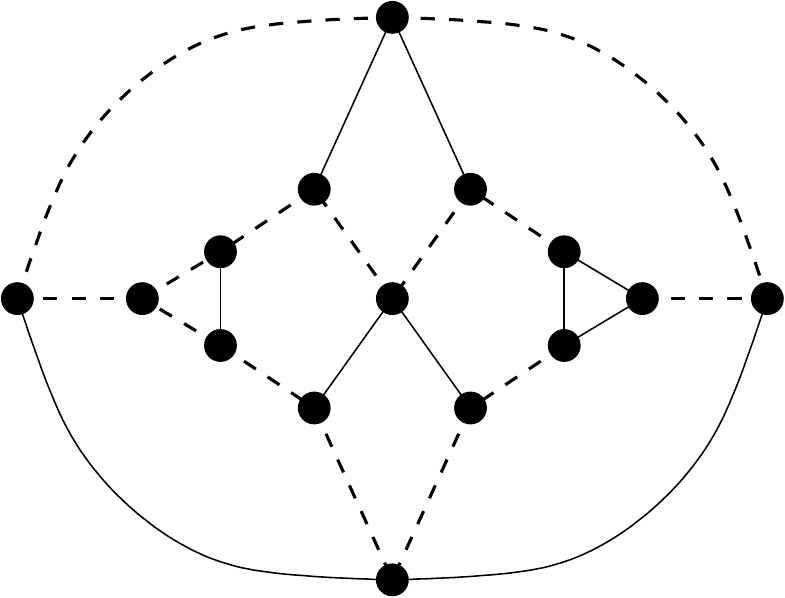}
\caption{Nonhamiltonian planar graph with a Tutte tree in dashed edges.}
\label{fig:herschel}
\end{figure}

\begin{lem}\label{lem:nontutte}
There are infinitely many planar $3$-connected graphs with no Tutte tree.
\end{lem}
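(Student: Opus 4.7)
The plan is to combine Theorem~\ref{thm:tuttetree} with an infinite family of $3$-connected planar bipartite graphs whose two color classes have different sizes. By Theorem~\ref{thm:tuttetree}, a planar graph with a Tutte tree must be either Hamiltonian or admit a spanning tree whose three leaves induce a triangle. A bipartite graph is triangle-free, so the second possibility is automatically excluded; and a bipartite graph whose color classes have different sizes cannot be Hamiltonian. Hence any bipartite, $3$-connected, planar graph with unequal color classes has no Tutte tree, and it suffices to exhibit infinitely many such graphs.

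The natural starting point is the Herschel graph $H$, which is $3$-connected, planar, bipartite, with color classes of sizes $5$ and $6$. By Euler's formula each of its nine faces is a $4$-cycle, and every such $4$-face contains two vertices of each color class. To build the family, I would iterate the following face-gluing operation. Given a $3$-connected planar bipartite graph $G$ all of whose faces are $4$-cycles and any $4$-face $f$ of $G$, glue a fresh copy of $H$ by identifying the boundary of $f$ color-preservingly with the boundary of a chosen $4$-face of that copy, and drawing the new copy inside the region bounded by $f$. The resulting graph $G'$ is again planar, bipartite, and has all faces of size $4$ (so the procedure can be iterated); and since the shared face contributes two vertices to each color class, its color-class imbalance equals that of $G$ plus $1$. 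Starting from $H$ and iterating yields an infinite family $\{G_n\}_{n \geq 1}$ with strictly increasing vertex count and color-class imbalance at least $n$.

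The hard part will be verifying that the gluing operation preserves $3$-connectivity, since bipartite constraints rule out several simpler vertex-replacement gadgets. For a hypothetical $2$-vertex cut $S$ of $G'$, at most two of the four identified boundary vertices of $f$ lie in $S$, so at least two remain in $G'-S$. Because each of $G$ and the newly attached copy of $H$ is $3$-connected, deleting the at most two vertices of $S$ from either of them keeps it connected; the surviving identified vertices of $f$ then bridge the two pieces, so $G'-S$ is connected—contradiction. Thus every $G_n$ is $3$-connected, and since $|V(G_n)|$ strictly increases with $n$, we obtain infinitely many $3$-connected planar graphs with no Tutte tree.
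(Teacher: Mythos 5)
Your proof is correct, but it takes a genuinely different route from the paper's. You reduce to finding infinitely many $3$-connected planar graphs that are triangle-free and bipartite with unequal colour classes, so that both alternatives of Theorem~\ref{thm:tuttetree} fail, and you realise such graphs by iteratively gluing copies of the Herschel graph along quadrilateral faces; your checks (simplicity and chordlessness of the glued $4$-faces via bipartiteness, preservation of the all-quadrilateral face structure, the imbalance bookkeeping, and the $3$-connectivity argument via the two surviving shared boundary vertices) all go through. The paper instead takes any $3$-connected planar triangulation on $n\ge 5$ vertices and inserts a new (white) vertex into each face joined to the three face vertices: Euler's formula gives $2n-4$ white vertices forming an independent set, which is large enough to force every spanning tree to have two white leaves, and two nonadjacent leaves already contradict the clique condition on $G[L(T)]$ from Lemma~\ref{lem:tutte}. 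The paper's construction is shorter and exploits the leaf-clique condition directly (no Hamiltonicity discussion needed), whereas yours leans on non-Hamiltonicity via the bipartite parity obstruction and, as a bonus, shows the examples can even be taken bipartite and quadrangulated; the price is the extra work of verifying that the gluing preserves planarity, bipartiteness, and $3$-connectivity.
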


\begin{proof}
Let $n \ge 5$ be an integer and $H_n$ be a $3$-connected planar
triangulation with $n$ vertices. 
We colour these vertices black.
For each face of $H_n$, we add a new vertex which is joined to the 
three black vertices in the face. 
We colour the new vertices white.
Let $G_n$ be the resulting graph.

Trivially $G_n$ is planar and $3$-connected. 
Since $G_n$ has $n$ black vertices then, from Euler's formula, 
it follows that $G_n$ has $2n-4$ white vertices. 
Since white vertices are not adjacent to each other, 
every spanning tree in $G_n$ has at least two white 
vertices as leaves. 
Hence, by Theorem~\ref{thm:tuttetree}, graph $G_n$ has no Tutte tree.
\end{proof}

For describing the structure of a $2$-connected graph with a
Tutte tree, we need to recall some concepts.  
Let $H$ be a proper subgraph of a connected graph $G$.  
We say $B$ is an \defi{$H$-bridge} in $G$ if:
\begin{enumerate}[(i)]
\item\label{bridge1} $B$ is  an edge of 
$E(G) \setminus E(H)$ with both ends in 
$V(H)$; or
\item\label{bridge2} $B$ is the union of a component 
$C$ of $G - H$ plus all edges connecting $C$ to $H$.
\end{enumerate}
The vertices of $H$ that have neighbors in $B$ are 
\defi{vertices of attachment} of $B$.

It follows from the definition that any two vertices of $B$ are
connected in $B$ by a path internally disjoint from $H$; and any two
different bridges intersect only in vertices of $H$.  For an
$H$-bridge $B$, the vertices $V(B) \setminus V(H)$ are its
\defi{internal vertices}. A bridge is \defi{trivial} if it has no
internal vertex; that is, it is an edge.

\begin{lem}\label{lem:tt2c}
Let $G$ be a $2$-connected graph and $\{u,v\}$ be a $2$-vertex cut. 
If $G$ has a Tutte tree, then exactly one of the following holds: 
\begin{inparaenum}[(i)]
\item there are two $\{u,v\}$-bridges in $G$ and at least one has
      a Hamiltonian $uv$-path; or
\item there are three $\{u,v\}$-bridges in $G$, one is trivial
      and another one has a Hamiltonian $uv$-path.
\end{inparaenum}
\end{lem}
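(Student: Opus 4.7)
The plan is to derive the structure entirely from the single fact that the $uv$-path $P = uTv$ is nonseparating; I expect the full Tutte condition to be overkill for this lemma. The first step is to show $P$ lies inside a single $\{u,v\}$-bridge $B_1$. Starting with the first edge $uw_1$ of $P$, which belongs to a unique bridge $B_1$, I would argue by induction along $P$ that each subsequent edge stays in $B_1$: every interior vertex $w_i$ of $P$ is distinct from both $u$ and $v$, hence internal to exactly one bridge, forcing the next edge $w_iw_{i+1}$ into that same bridge. (If $P$ is the single edge $uv$, one simply takes $B_1$ to be the trivial bridge $\{uv\}$.)

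With $B_1$ identified, the vertex set $V(G)\setminus V(P)$ decomposes as
\[
\bigl(V(B_1)\setminus V(P)\bigr) \;\cup\; \bigcup_{i\ne 1}\bigl(V(B_i)\setminus \{u,v\}\bigr),
\]
and no two of these pieces are joined by an edge of $G$, since any such edge would give a path in $G-\{u,v\}$ between distinct bridges. Connectedness of $G-P$ therefore forces at most one of these pieces to be nonempty.

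I would then split according to whether $P$ is Hamiltonian in $B_1$. If $V(B_1)\setminus V(P) \ne \emptyset$, every bridge other than $B_1$ must be trivial; since $G$ is simple there is at most one trivial bridge, so there are exactly two bridges and the trivial one provides a Hamiltonian $uv$-path, yielding case (i). If $V(B_1)=V(P)$, then $B_1$ itself carries the Hamiltonian $uv$-path $P$; at most one additional non-trivial bridge may exist, possibly alongside a single trivial bridge (the edge $uv$), producing 2 or 3 bridges in total and matching case (i) or case (ii) respectively.

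The main point to watch is the degenerate subcase where $P$ is the single edge $uv$, so $B_1$ is itself the trivial bridge; there $P$ is automatically a Hamiltonian $uv$-path of $B_1$ and the argument collapses into case (i). Mutual exclusivity of the two conclusions is then automatic from the bridge counts, since case (i) has exactly two bridges and case (ii) has exactly three.
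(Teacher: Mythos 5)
Your proposal is correct and follows essentially the same route as the paper: both arguments use only that the tree path $uTv$ is nonseparating, observe that it must lie inside a single $\{u,v\}$-bridge, and then deduce the bridge count and the Hamiltonian $uv$-path from the connectedness of $G - uTv$. Your write-up is somewhat more detailed (the explicit decomposition of $V(G)\setminus V(P)$ into mutually non-adjacent pieces), whereas the paper simply cases on the number of bridges, but the underlying idea is identical.
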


\begin{proof}
Let $\{u,v\}$ be a $2$-vertex cut and suppose that 
$G$ has a Tutte tree $T$.
Let $uTv$ be the unique path between $u$ and $v$ in $T$.
By the definition of bridge, $uTv$ is  contained in a $\{u,v\}$-bridges. 
Since we only consider simple graphs, at most one 
$\{u,v\}$-bridge is trivial. 
If there are two $\{u,v\}$-bridges, then $uTv$ must be a 
Hamiltonian $uv$-path in its bridge, otherwise $uTv$ is separating. 
If there are three $\{u,v\}$-bridges, then one must be trivial
and $uTv$ must be a Hamiltonian $uv$-path in another bridge that contains $uTv$,
otherwise $uTv$ would be separating. 
In the case of at least four $\{u,v\}$-bridges, then $G-uTv$
has two components. 
\end{proof}

The structure of a $3$-connected graph with a Tutte tree is very related to Hamiltonicity, 
as well as graphs with a $2$-vertex cut as we observed in the previous result, 
so the problem of finding a Tutte tree turns out to be a hard problem.

\begin{lem}\label{lem:ttnp}
Deciding if a cubic planar $3$-connected graph with no facial triangles 
has a Tutte tree is an NP-complete problem.
\end{lem}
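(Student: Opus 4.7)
The plan is to place the problem in NP and then reduce from a known NP-complete variant of Hamiltonicity, using Theorem~\ref{thm:tuttetree} to convert the Tutte-tree question into a Hamiltonicity question in the constructed graph. Membership in NP is immediate: a spanning tree $T$ serves as a polynomial-size certificate, and each of its $O(n^2)$ tree-paths $uTv$ can be tested for non-separating in polynomial time by checking connectedness of $G-uTv$.

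For NP-hardness, I would reduce from the Hamiltonian cycle problem in cubic planar 3-connected graphs, which is NP-complete by Garey, Johnson, and Tarjan. Given such a graph $G$, I construct $G'$ by replacing each facial triangle $abc$ of $G$ with the following seven-vertex gadget: subdivide each of the edges $ab$, $bc$, $ca$ with a new vertex $x_{ab}$, $x_{bc}$, $x_{ca}$ respectively, and insert a new vertex $y$ inside the (formerly triangular) face, joined to $x_{ab}$, $x_{bc}$, $x_{ca}$. In a cubic planar 3-connected graph, any two facial triangles are vertex-disjoint---two facial triangles sharing an edge $uv$ would force the two other vertices to form a 2-cut, contradicting 3-connectedness---so the gadgets do not interact. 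A routine check confirms that $G'$ is cubic, planar, 3-connected, and contains no triangle inside any gadget; in particular, $G'$ has no facial triangles.

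For correctness, I show that $G$ is Hamiltonian iff $G'$ is. Any Hamiltonian cycle of $G$ uses exactly two of the three edges of each facial triangle, say $ab$ and $bc$; the segment $a$--$b$--$c$ is replaced in $G'$ by the gadget-path $a$--$x_{ab}$--$b$--$x_{bc}$--$y$--$x_{ca}$--$c$, which visits all seven gadget vertices. Conversely, a parity argument---summing used degrees over the seven gadget vertices gives $14 = 2I + E$, where $I$ is the number of used internal edges and $E$ the number of used external edges among $aa', bb', cc'$---forces $E$ to be even; a small case analysis rules out $E=0$ and all internally disconnected configurations for $E=2$, leaving the cycle's restriction to the gadget as a Hamiltonian path between the two vertices whose external edge is used, passing through the third. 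Contracting each gadget back yields a Hamiltonian cycle of $G$. Since $G'$ has no triangles, Theorem~\ref{thm:tuttetree} gives that $G'$ has a Tutte tree iff $G'$ is Hamiltonian, completing the reduction.

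The main obstacle is the converse direction of the Hamiltonicity equivalence, namely the case analysis that rules out internally disconnected gadget traces (where the cycle's restriction to a gadget would include a small internal cycle disjoint from the rest). A minor subtlety is that non-facial triangles of $G$ persist in $G'$ and could create spurious Tutte trees through the second alternative of Theorem~\ref{thm:tuttetree}; this can be sidestepped either by a preliminary gadget that also destroys non-facial triangles, or by starting from a triangle-free subclass (e.g., bipartite cubic planar 3-connected graphs, for which Hamiltonicity is also NP-hard), in which case the identity map is already a valid reduction.
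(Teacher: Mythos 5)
Your gadget construction is sound as far as it goes, and it even supplies something the paper leaves implicit: an explicit reduction from Hamiltonicity on all cubic planar $3$-connected graphs \cite{GaJo76} to Hamiltonicity on the subclass with no facial triangles (the parity argument and the ``no internal subcycle'' observation do settle the converse direction you worry about). But the proof has a genuine gap at exactly the point you dismiss as a ``minor subtlety,'' and neither of your proposed repairs works. Theorem~\ref{thm:tuttetree} offers \emph{two} ways for $G'$ to have a Tutte tree --- a Hamiltonian cycle, or a spanning tree whose leaves induce a triangle --- and since non-facial triangles of $G$ survive in $G'$, you must rule out the second alternative before concluding that a Tutte tree forces Hamiltonicity. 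Your second fallback, reducing from bipartite cubic planar $3$-connected graphs, is not available: that is precisely the class of Barnette's conjecture, for which Hamiltonicity is not known to be NP-hard (and is conjectured to hold always). Your first fallback, a ``preliminary gadget that also destroys non-facial triangles,'' is left unspecified and is not obviously realizable: your facial-triangle gadget needs an empty face in which to embed $y$, which a non-facial triangle does not provide, and subdividing edges alone destroys cubicity.

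The missing idea --- which is essentially the entire content of the paper's own proof --- is that no such repair is needed. In a cubic planar $3$-connected graph $H$ with no facial triangles, every triangle $\Delta$ is non-facial, so both regions of the Jordan curve it bounds contain vertices of $H$; since each vertex of $\Delta$ has exactly one neighbor outside $\Delta$, the graph $H-\Delta$ splits into a nonempty inside part and a nonempty outside part with no edge between them, i.e., $H-\Delta$ is disconnected. A spanning tree whose leaf set is exactly $V(\Delta)$ would, after deleting its three leaves, remain a connected spanning subgraph of $H-\Delta$, a contradiction; hence the second alternative of Theorem~\ref{thm:tuttetree} is void on this class and ``has a Tutte tree'' is equivalent to ``is Hamiltonian.'' The paper applies this observation and invokes \cite{GaJo76} directly; once you add it, your gadget reduction becomes a complete (and arguably more self-contained) proof. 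Without it, your claimed equivalence ``$G'$ has a Tutte tree iff $G'$ is Hamiltonian'' is unjustified whenever $G$ contains a non-facial triangle.
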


\begin{proof}
Let $G$ be a cubic planar $3$-connected graph with no facial triangles. 
If $G$ has a triangle $\Delta$, then $G-\Delta$ is disconnected.  
It follows that it is not possible to get a spanning tree of $G$ whose leaves are
exactly the vertices of $\Delta$.
Thus, from Theorem~\ref{thm:tuttetree}, 
$G$ has a Tutte tree if and only if it is Hamiltonian. 
On other hand,
Garey, Johnson, and Tarjan~\cite{GaJo76} proved that, 
for cubic planar $3$-connected graphs,
deciding the existence of a Hamiltonian cycle is an NP-complete problem. 
Hence the lemma follows.
\end{proof}

\begin{proof}[Proof of Theorem~\ref{thm:3contt}]
Let $P$ be a path in $T$. 
Clearly $P$ contains at most two vertices of $L(T)$, 
and therefore $G[L(T)]-P$ is connected. Note that every
component of $G-P$ has a leaf of $T$. 
It follows that $P$ is nonseparating and hence $T$ is a Tutte tree.
\end{proof}


In the nonplanar case, the $3$-connectedness in $L(T)$ is a sufficient
but not necessary condition. There exist Tutte trees where
$G[L(T)]$ is for instance an independent set (Figure~\ref{fig:k5proj}), a path (Figure~\ref{fig:k3,3}), or a cycle (Figure~\ref{fig:petersen}).

\begin{figure}[h!]
        \centering
        \begin{subfigure}[t]{0.3\textwidth}
                \includegraphics[width=\textwidth]{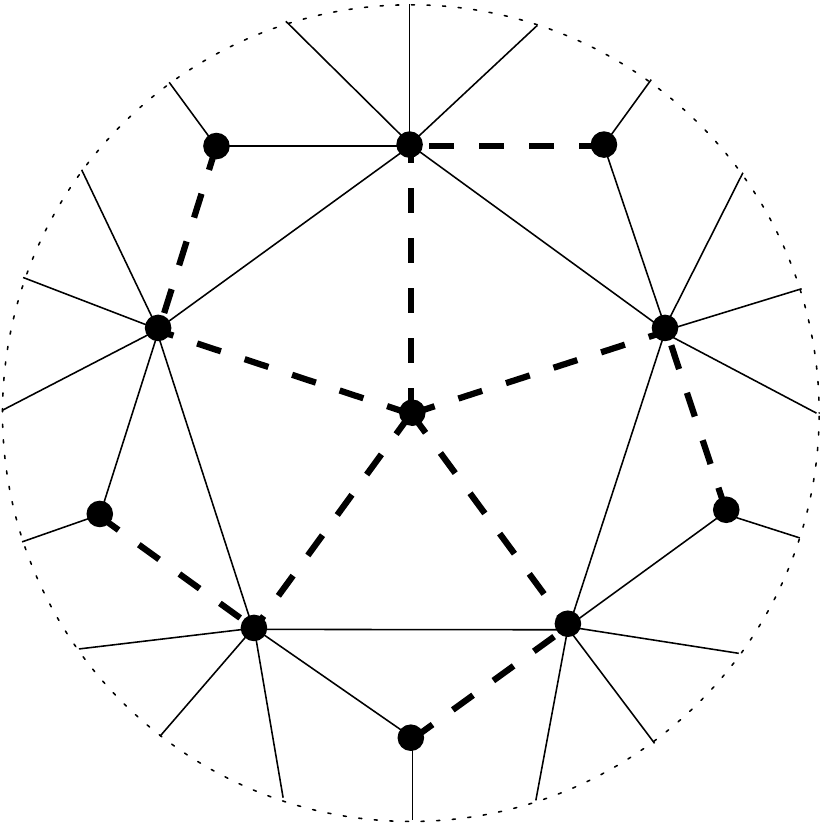}
                \caption{Barycentric subdivision of $K_5$ in the projective plane.}
                \label{fig:k5proj}
        \end{subfigure}%
        ~ 
        \begin{subfigure}[t]{0.3\textwidth}
                \includegraphics[width=\textwidth]{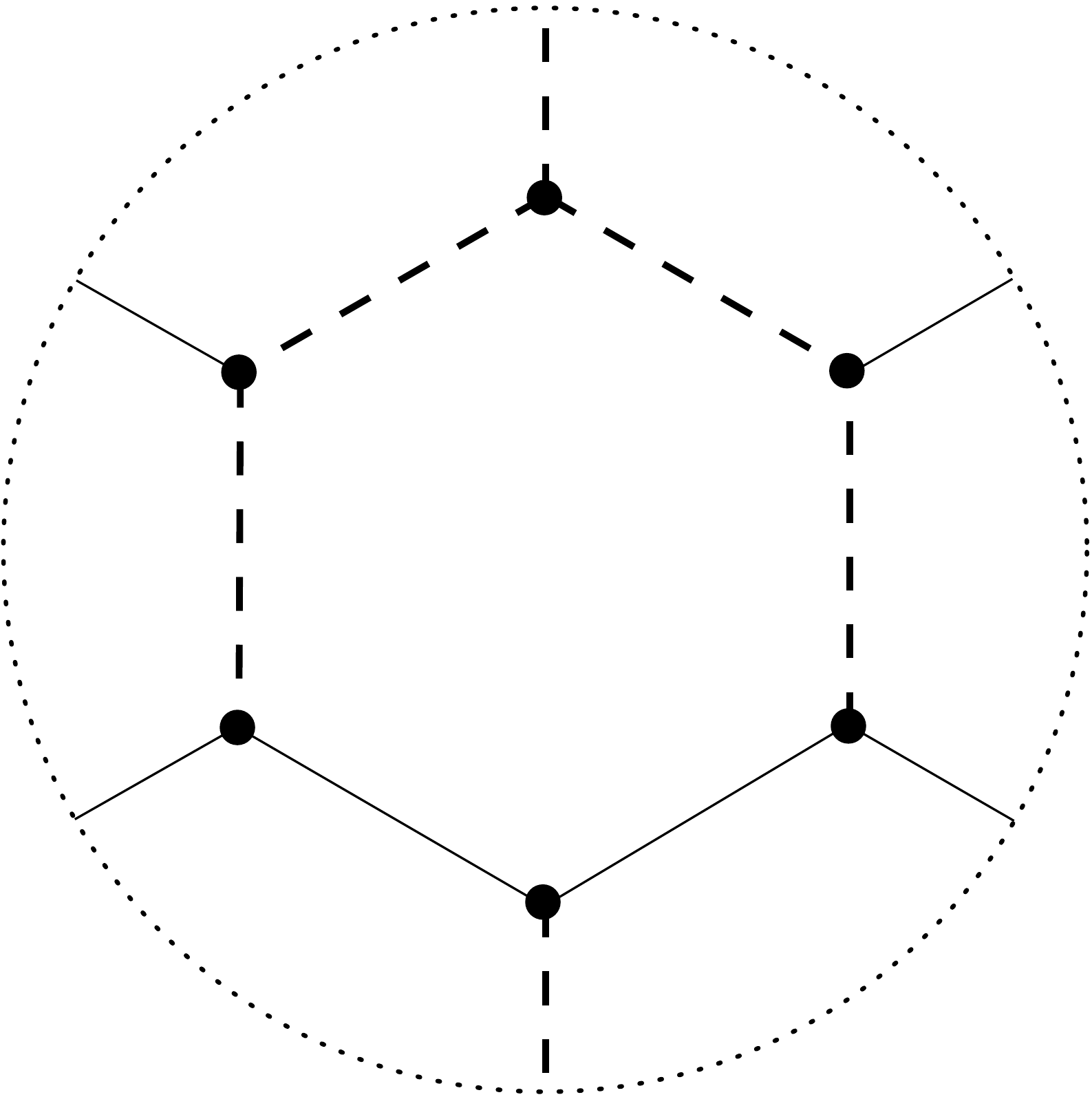}
                \caption{$K_{3,3}$ in the projective plane.}
                \label{fig:k3,3}
        \end{subfigure}
        ~ 
        \begin{subfigure}[t]{0.3\textwidth}
                \includegraphics[width=\textwidth]{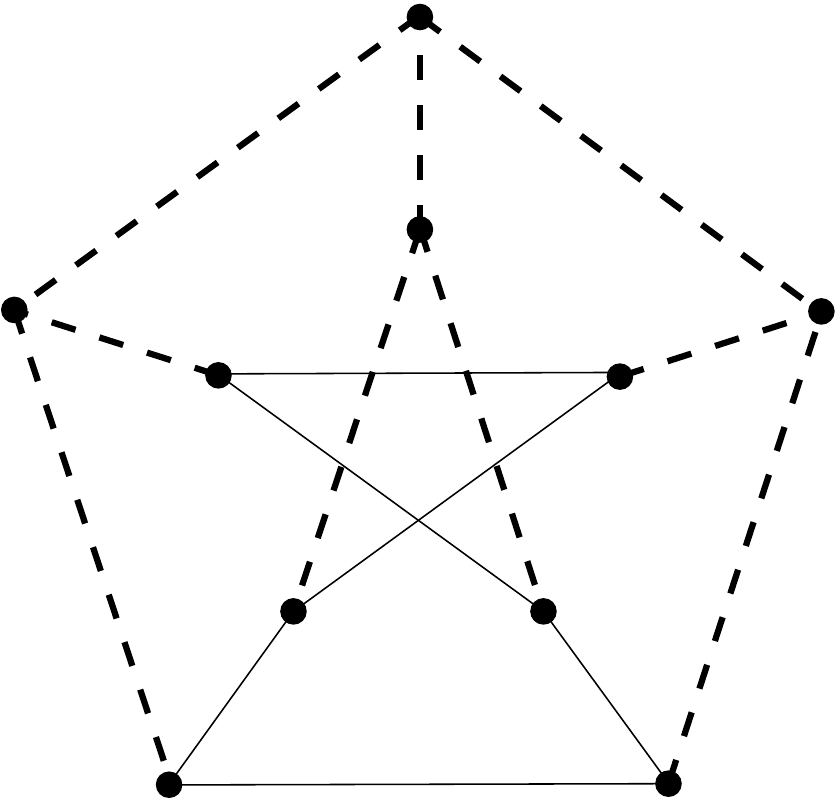}
                \caption{Petersen graph.}
                \label{fig:petersen}
        \end{subfigure}
        \caption{$3$-connected nonplanar graphs with a Tutte tree in dashed 
          edges.}\label{fig:nonplanartt}
\end{figure}

Unlike the planar graphs, the existence of a Tutte tree in a nonplanar
graph implies neither a Hamiltonian cycle nor a Hamiltonian path. See
for instance the $3$-connected nonplanar graph $K_{3,5}$
(Figure~\ref{fig:k3,5}).

\begin{figure}[h!]
\centering
\includegraphics[width=0.3\textwidth]{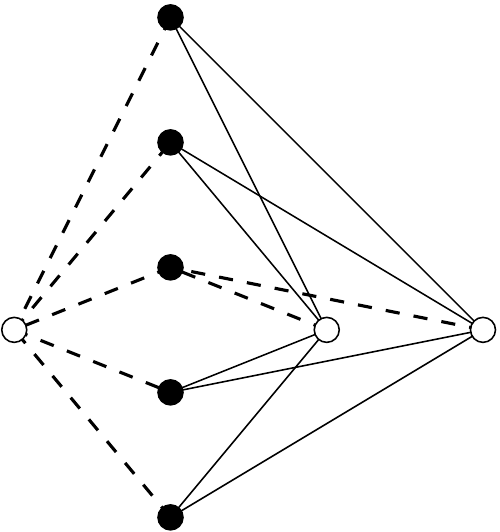}
\caption{Nontraceable and thus nonhamiltonian,\\ $3$-connected nonplanar graph with a Tutte tree in dashed edges.}
\label{fig:k3,5}
\end{figure}

As in the planar case, $3$-connectedness is not a sufficient condition for the existence of a Tutte tree in a nonplanar graph. 
\begin{figure}[!ht]
\centering
\includegraphics[width=0.3\textwidth]{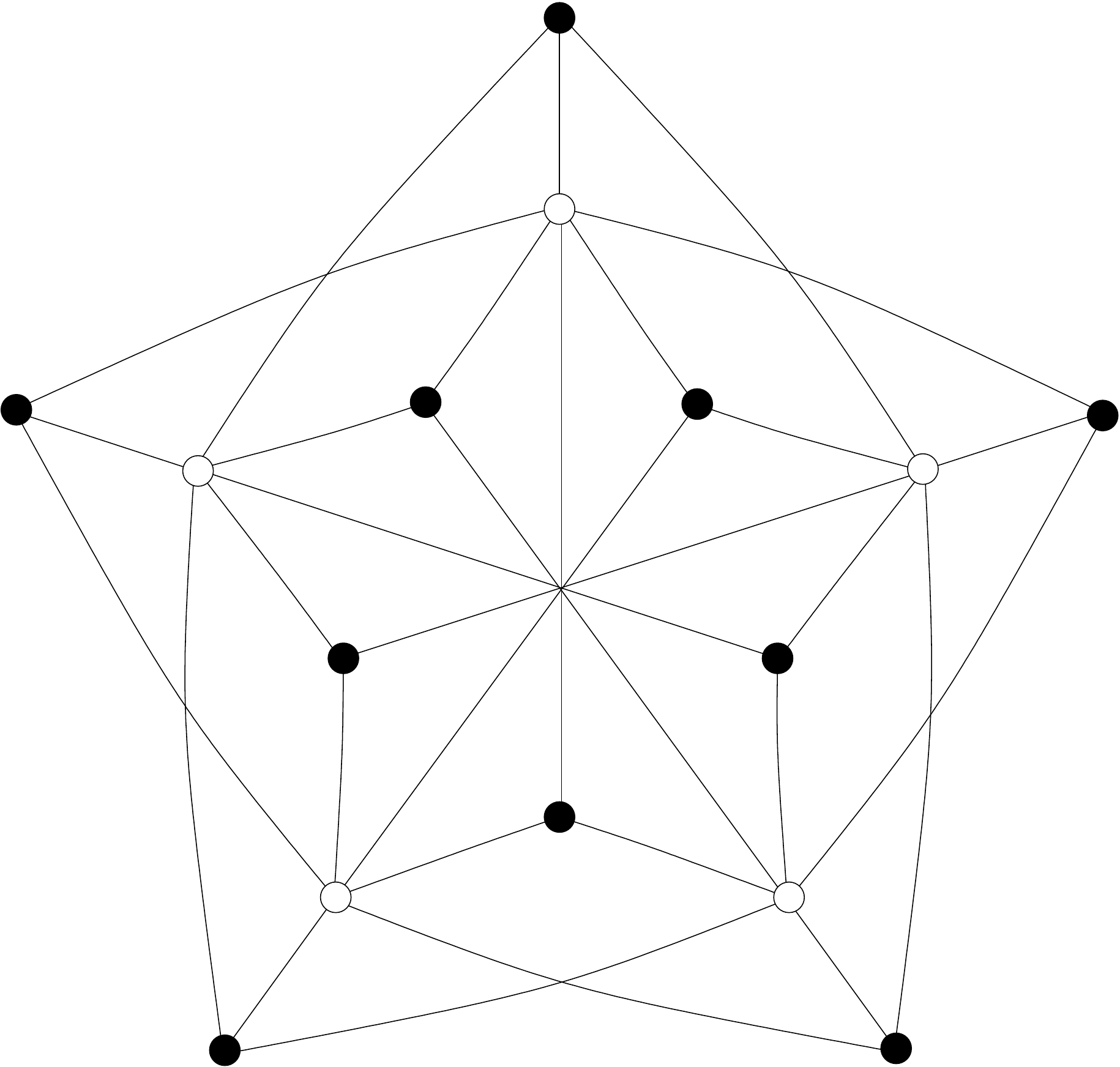}
\caption{Graph $S$, which is $3$-connected nonplanar\\ 
          with no Tutte tree.}
\label{fig:stark5}
\end{figure}

\begin{prop}\label{prop:star}
Let $S$ be the graph obtained as follows: take a set of
five white vertices, then, for each three of these, add a new black
vertex and join it to each of the three vertices
(Figure~\ref{fig:stark5})  
Then $S$ is 3-connected and does not have a Tutte tree.
\end{prop}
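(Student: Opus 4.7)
The plan is to establish the two assertions separately. For $3$-connectedness of $S$, I will do a short case analysis based on whether the two removed vertices are both white, both black, or of mixed colours; in each case, the remaining graph stays connected because every white has six black neighbours (one for each 3-subset containing it), every black has three white neighbours, and any two distinct whites $w_i, w_j$ are joined by a length-$2$ path through any of the three blacks containing $\{w_i, w_j\}$.

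For the nonexistence of a Tutte tree, I will argue by contradiction: suppose $T$ is a Tutte tree of $S$. The key observation is that for any path $P$ of $T$, every triple $\{w_i, w_j, w_k\}$ of white vertices lying in $V(P)$ forces the corresponding black vertex $\{w_i, w_j, w_k\}$ to also lie in $V(P)$; otherwise that black, whose only neighbours are $w_i, w_j, w_k$, becomes isolated in $S - V(P)$. Because $T$ is bipartite (with parts the whites and the blacks) and no two blacks are $S$-adjacent, any $T$-path between two whites alternates colours, and a path containing $k$ whites has exactly $k - 1$ blacks. Since $k - 1 < \binom{k}{3}$ already for $k = 4$, no $T$-path between two whites can contain four or more whites.

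Let $T_W \subseteq T$ be the minimal subtree spanning the five whites, and form the tree $T_W^{\star}$ on the five whites by declaring $w_i \sim w_j$ exactly when the $T_W$-path between them uses no other white. The preceding paragraph shows that $T_W^{\star}$ has diameter at most two edges, so it is a star with some centre white $w$ and four leaves $w_2, w_3, w_4, w_5$. Bipartiteness of $T$ together with the non-adjacency of blacks then forces each $w_i$ to be joined to $w$ in $T_W$ by a path of length exactly $2$, hence of the form $w - b_i - w_i$ for a unique black $b_i = \{w, w_i, x_i\}$ with $x_i \in \{w_2, w_3, w_4, w_5\} \setminus \{w_i\}$. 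Applying the key observation to the $T$-path $P_{pq} = w_p - b_p - w - b_q - w_q$, whose whites are $\{w, w_p, w_q\}$, forces the black $\{w, w_p, w_q\}$ to equal $b_p$ or $b_q$, that is, $x_p = w_q$ or $x_q = w_p$. Thinking of $i \mapsto x_i$ as four directed arcs on $\{w_2, w_3, w_4, w_5\}$, each arc covers a single one of the $\binom{4}{2} = 6$ unordered pairs, and four arcs cannot cover six pairs: a contradiction.

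The main obstacle is the structural reduction from the fact that $T_W^{\star}$ is a star to the conclusion that $T_W$ is the spider with four length-$2$ legs; this relies crucially on combining the bipartite parity of $T$ with the independence of the black vertices in $S$. Once the spider structure is pinned down, the concluding covering argument on the four arcs is immediate.
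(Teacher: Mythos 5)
Your endgame is genuinely different from the paper's and is nicer: the paper takes a path $P$ with exactly three white vertices, attaches the remaining two whites to inner vertices of $P$, and then disposes of ``all possible configurations'' by an ad hoc count, whereas your arc-covering argument (four arcs $i \mapsto x_i$ cannot cover the six pairs) is a clean, uniform contradiction. The opening step (no $T$-path between two whites can contain four whites, because the $\binom{4}{3}=4$ forced black vertices cannot fit among the $3$ intermediate blacks) matches the paper's first step exactly and is correct.

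There is, however, a genuine gap in the middle: $T_W^{\star}$ as you define it need not be a tree, so ``diameter at most two, hence a star'' is not justified. If a black vertex $b=\{w_i,w_j,w_k\}$ has all three of its white neighbours adjacent to it in $T$, then $w_i,w_j,w_k$ form a triangle in $T_W^{\star}$; the same happens whenever two legs of your intended spider share their middle black (e.g.\ $b_p=b_q=\{w,w_p,w_q\}$ gives the triangle $w,w_p,w_q$), and a graph of diameter two with cycles need not have a dominating vertex. The structural conclusion you want --- a white vertex $w$ at $T$-distance exactly $2$ from each of the other four whites --- is nonetheless true, but it needs the paper's route rather than the diameter argument: since no black is adjacent to all five whites, some $T$-path between two whites contains a third white, so after truncation there is a path $w_1-b_1-w-b_2-w_3$ with exactly three whites; each of $w_4,w_5$ must then attach to an \emph{inner} vertex of this path (attachment at an endpoint creates a path with four whites), and the attaching path can contain no intermediate white (again one checks a four-white path would arise), so by bipartiteness it is either a single edge into $b_1$ or $b_2$ or a length-two path into $w$ --- in every case $w_i$ ends up at distance $2$ from $w$. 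With that repaired, your final count goes through, provided you also note that when $b_p=b_q$ the path $P_{pq}$ degenerates to $w_p-b_p-w_q$; this is harmless since then $x_p=w_q$ and $x_q=w_p$ anyway, and the arcs still cover at most four of the six pairs.
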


\begin{proof}
Clearly $S$ is 3-connected and bipartite. 
Let us suppose that $G$ has a Tutte tree $T$.

If $T$ has a path of length six between two white
vertices, i.e., a path with four white vertices, then this is a
separating path since at least one of the black vertices associated
with a triple formed by these four white vertices is not an inner
vertex in such a path. 
So, we may assume $T$  does not contains 
a path with four white vertices.

Since no black vertex is adjacent
to all five white vertices, $T$ has a path with at least three white
vertices. Let $P$ be a path in $T$ with exactly three
white vertices, two of which are ends.
Let $v_1,v_2$ denote the remaining two white vertex of $S - P$.  Then,
the path connecting $v_i$ to $P$ in $T$, called $v_iP$-path, has as vertex of
attachment an inner vertex of $P$
or there would be a path with four white vertices in $T$.  
Let $T'$ be the subtree resulting
from the union of $P$ and the $v_1P$-,$v_2P$-paths in $T$.
One can verify that, in all possible configurations of $T'$, 
the number of paths with three white vertices is greater 
than the number of black vertices in $T'$ and therefore 
$T$ has a separating path, 
namely, the path containing three white vertices in $T'$
whose corresponding black vertex is not in $T'$.
\end{proof}


\section{Fundamental Tutte trees}
\label{sec:fundtuttetree}

Unlike a Tutte tree, if a graph is connected but not 2-connected, a
fundamental Tutte tree can exist.  
For instance, a path has a fundamental Tutte tree: itself. 
Indeed, it does not have any (fundamental) cycle. 
Note that every Tutte tree is a fundamental Tutte tree.

In order to better understand graphs with a fundamental Tutte tree, we
explore their structure.
\begin{lem}\label{lem:ftt1c}
A graph $G$ with a cut vertex has a fundamental Tutte tree if and only if
\begin{inparaenum}[(i)]
\item every nontrivial block of $G$ is a leaf in the block tree of $G$,
\item every nontrivial block of $G$ is Hamiltonian, and
\item every articulation vertex of a nontrivial block of $G$ has degree two in the nontrivial block.
\end{inparaenum}
\end{lem}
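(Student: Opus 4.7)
The plan is to reduce everything to the block structure of $G$, using the standard observation that for any spanning tree $T$ and any block $B$ of $G$, the restriction $T\cap B$ is a spanning tree of $B$, and every fundamental cycle $C_e$ with $e\in E(B)\setminus E(T)$ lies entirely inside $B$. In particular, every bridge of $G$ must belong to $T$, so the non-tree edges live only in the nontrivial blocks; this makes the fundamental Tutte property a block-by-block condition.

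For the $(\Rightarrow)$ direction I fix a fundamental Tutte tree $T$ and a nontrivial block $B$ with an articulation vertex $v$. Letting $A_1,\ldots,A_k$ be the components of $G-v$ and $A_{i_0}$ the one containing $B\setminus\{v\}$, the key observation is that for any non-tree edge $e\in E(B)$ whose fundamental cycle $C_e$ passes through $v$, the graph $G-C_e$ loses $v$ and its remaining vertices split into $A_{i_0}\setminus V(C_e)$ together with the other $A_j$'s. Connectedness of $G-C_e$ then forces $k\le 2$, so $v$ lies in only two blocks and $B$ is a leaf of the block tree, giving~(i); and $A_{i_0}\setminus V(C_e)=\emptyset$, so $V(C_e)=V(B)$ and $B$ is Hamiltonian, giving~(ii). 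For~(iii), I argue that if $\deg_B(v)\ge 3$ one can always exhibit a non-tree edge incident to $v$ whose fundamental cycle traverses $v$ but does not exhaust $V(B)$, contradicting the previous step.

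For the $(\Leftarrow)$ direction I build $T$ explicitly: include every bridge of $G$, and for each nontrivial block $B$ let $v_B$ be its unique articulation vertex. Since $\deg_B(v_B)=2$, both edges at $v_B$ in $B$ lie on the Hamiltonian cycle $H_B$, so removing one of them produces a Hamiltonian path $P_B$ starting at $v_B$, whose edges I add to $T$. Condition~(i) makes the blocks meet in a tree-like way, so $T$ is a spanning tree of $G$. To check the fundamental Tutte property, fix any non-tree edge $e$; it lies in some nontrivial block $B$ with $C_e\subseteq B$, and since $v_B$ is a leaf of $P_B$ we have $v_B\in V(C_e)$ iff $v_B$ is an endpoint of $e$. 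By $\deg_B(v_B)=2$ this happens only for the single non-tree edge $e^{*}=v_Bu$ closing $H_B$; in that case $C_{e^{*}}=H_B$ and $G-C_{e^{*}}$ is the induced subgraph on $V(G)\setminus V(B)$, which is connected thanks to~(i). For any other non-tree edge $e$ in $B$, the vertex $v_B$ survives in $G-C_e$, and the backup edge $e^{*}=v_Bu$ (with both endpoints outside $V(C_e)$) bridges the two sub-paths of $P_B$ that a chord $e$ leaves behind, so $G-C_e$ stays connected.

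The main obstacle is the bookkeeping in the $(\Leftarrow)$ direction when $e$ is an internal chord of $P_B$ splitting it into two sub-paths: the non-tree Hamiltonian edge $e^{*}=v_Bu$ has to be invoked to reconnect them, and its endpoints have to survive the deletion, which rests on $\deg_B(v_B)=2$. The analogous delicate point in $(\Rightarrow)$ is to select, in the presence of a high-degree articulation vertex, a non-tree edge whose fundamental cycle genuinely crosses $v$ while leaving part of $V(B)$ outside; this single choice simultaneously pins down~(ii) and~(iii).
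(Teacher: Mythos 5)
Your $(\Rightarrow)$ direction is sound and is essentially the paper's argument, just organized through the components $A_1,\dots,A_k$ of $G-v$: a fundamental cycle through $v$ lying inside $B$ that is not Hamiltonian in $B$ leaves at least two nonempty, mutually non-adjacent pieces behind, which yields (i) and (ii) simultaneously, and (iii) follows by exhibiting a non-tree edge at $v$ whose fundamental cycle is a proper subcycle. The one thing you should add is a sentence guaranteeing that a fundamental cycle through $v$ inside $B$ exists at all: take any cycle of the $2$-connected block $B$ through $v$ and write it as a sum of fundamental cycles of non-tree edges of $B$; if none of them contained $v$, the sum would use no edge at $v$. The paper makes exactly this point by citing that the fundamental cycles form a basis of the cycle space.

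The $(\Leftarrow)$ direction, however, contains a false step: ``$G-C_{e^{*}}$ is the induced subgraph on $V(G)\setminus V(B)$, which is connected thanks to~(i).'' Condition (i) only says that $B$ meets the rest of $G$ in the single cut vertex $v_B$; it does not prevent $v_B$ from lying in three or more blocks, and in that case deleting all of $V(B)$ --- in particular $v_B$ itself --- disconnects the remaining blocks from one another. Concretely, let $G$ be three triangles glued at a single vertex $v$: conditions (i)--(iii) all hold, yet for every spanning tree each fundamental cycle is one of the triangles, and removing a triangle (hence $v$) leaves two disjoint edges. So your construction does not produce a fundamental Tutte tree there, and no construction can, because this graph is a counterexample to the ``if'' direction of the statement itself; the paper's own proof buries the identical issue under ``it is not hard to see that $T$ is a fundamental Tutte tree.'' The rest of your verification --- chords of $P_B$ avoiding $v_B$, reconnection through $e^{*}$ --- is correct, so the gap is isolated to the single fundamental cycle $C_{e^{*}}=H_B$, and it can only be closed by strengthening the hypotheses, e.g.\ by additionally requiring that $G-V(B)$ be connected for every nontrivial block $B$ (equivalently, that the articulation vertex of a nontrivial block lie in exactly two blocks).
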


\begin{proof}
\noindent{($\Rightarrow$)} 
\emph{(i)}
 If a nontrivial block is not a leaf in the block tree, then it has
 at least two articulation vertices, say $u$ and $v$. 
 Since it is $2$-connected, it contains a cycle with $u$ and $v$.  
 It is known that the fundamental cycles form a basis of the cycle
 space~\cite[Theorem~1.9.5]{Di:book}. 
 It follows that any spanning tree of $G$ has
 a fundamental cycle containing $u$ inside the block. 
 Such a cycle separates the blocks attached to $u$ from the 
 blocks attached to $v$. 
 Thus every nontrivial block must be a leaf in the block tree of $G$.

\emph{(ii)} 
Let $u$ be an articulation point of a nontrivial block of $G$. 
Then $u$ is contained in some fundamental cycle on the tree in this block.
If such a cycle does not contain all vertices in the block, then it
separates the blocks attached to $u$ from the rest of the
block. 
Hence this fundamental cycle containing $u$ also contains all
vertices in the block. 
It follows that the block is Hamiltonian and the fundamental 
Tutte tree restricted to the block is a Hamiltonian path of the block.

\emph{(iii)} 
Let $u$ be as in the proof of \emph{(ii)}. 
Suppose that $u$ has degree greater than two in the nontrivial block. 
Then at least one edge adjacent to~$u$ is a chord of the fundamental 
cycle of the tree containing~$u$ in the block, 
which is also a Hamiltonian cycle in the block, as argued for \emph{(ii)}. Thus the
fundamental cycle induced by the chord does not contain all vertices
of the block and separates the blocks attached to~$u$ from the
rest of the block. Hence $u$ has degree two in the block.

\noindent{($\Leftarrow$)} 
Let $T'$ be obtained from the block tree of $G$ by 
removing the nontrivial blocks, which are leaves.
For every nontrivial block, we choose a Hamiltonian path that results
of the Hamiltonian cycle after removing an edge incident to the
articulation point. 
Joining each Hamiltonian path to the tree $T'$ yields a spanning tree $T$. 
It is not hard to see that $T$ is a fundamental Tutte tree of $G$.
\end{proof}


We now start to describe the structure of graphs with a 2-vertex cut
that have a Tutte tree.


\begin{lem}\label{lem:ftt2c}
Let $G$ be a $2$-connected graph and $\{u,v\}$ be a $2$-vertex cut. 
If $G$ has a fundamental Tutte tree, then there are at most 
three $\{u,v\}$-bridges in $G$.
\end{lem}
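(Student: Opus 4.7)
The plan is to argue by contradiction. Assuming $G$ has at least four $\{u,v\}$-bridges, I will exhibit a fundamental cycle with respect to $T$ that is separating, contradicting the hypothesis that $T$ is a fundamental Tutte tree. The starting observation is that the unique $uv$-path $uTv$ lies inside a single bridge, which I will call $B_1$. Indeed, its internal vertices live in $V(G)\setminus\{u,v\}$, and once $uTv$ enters a component of $G-\{u,v\}$ it can reach another only by re-visiting $u$ or $v$, which a path cannot do. Note also that in a $2$-connected graph each nontrivial $\{u,v\}$-bridge has both $u$ and $v$ as vertices of attachment, and the only possible trivial bridge is the edge $uv$ (when present).

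For the first case, suppose $uv\in E(G)\setminus E(T)$. Then the fundamental cycle $C_{uv}=uTv+uv$ has all its vertices in $V(B_1)\cup\{u,v\}$. Among the remaining $\ge 3$ bridges at most one is trivial (the edge $uv$, which anyway is excluded since $uv\notin E(T)$ means the trivial bridge sits outside $B_1$), so we can pick two nontrivial bridges $B_j,B_k\notin\{B_1\}$. Each of $B_j$ and $B_k$ contains internal vertices, and in $G-C_{uv}$ any walk between them would need to pass through $u$ or $v$, both of which are removed. Hence $C_{uv}$ is separating, a contradiction.

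For the second case, suppose instead that either $uv\notin E(G)$ or $uv\in E(T)$; in this situation every bridge other than $B_1$ is nontrivial (in the second subcase, $B_1$ itself is the only, trivial, bridge of the form $uv$). Pick some $B_i\ne B_1$. The subgraph $T_i:=T\cap B_i$ inherits a forest structure in which $u$ and $v$ are in different components, because the $uv$-path in $T$ is in $B_1$, not in $B_i$. Since $B_i$ is connected and contains a $uv$-path, walking along that path I can find a non-tree edge $e_i=xy$ crossing from the $u$-component to the $v$-component of $T_i$. The fundamental cycle is then
\[
C_{e_i}=xTu\;\cup\;uTv\;\cup\;vTy\;\cup\;\{e_i\},
\]
which lies inside $V(B_1)\cup V(B_i)\cup\{u,v\}$ and contains both $u$ and $v$. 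Choosing two further nontrivial bridges $B_j,B_k$ (they exist because we assumed at least four bridges and at most one is trivial), their internal vertices remain in $G-C_{e_i}$ but are separated from each other, as any path between them in $G$ must pass through $u$ or $v$. So $C_{e_i}$ is separating, again contradicting the Tutte property.

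The main obstacle I expect is the second case: one has to locate a non-tree edge whose fundamental cycle actually swallows the entire path $uTv$ (forcing it to contain both $u$ and $v$ simultaneously). Producing such an edge reduces to showing $T_i$ has $u$ and $v$ in distinct components while $B_i$ contains a $uv$-path, which is where $2$-connectedness of $G$ and the uniqueness of $uTv$ are used. Once that edge is in hand, the separation argument via the remaining bridges is routine.
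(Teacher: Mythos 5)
Your proof is correct, and it reaches the contradiction by a genuinely different device than the paper in one key step. Both arguments share the same endgame: a fundamental cycle that meets two $\{u,v\}$-bridges must contain both $u$ and $v$, so with at least four bridges its removal strands the internal vertices of two further nontrivial bridges from one another, making the cycle separating. Where you differ is in how the existence of such a cycle is established. The paper argues indirectly: it shows every fundamental cycle would have to lie inside a single bridge, and then invokes the fact that the fundamental cycles of a spanning tree form a basis of the cycle space --- cycles using edges of two bridges could not be generated, a contradiction. You instead construct the offending fundamental cycle explicitly: since $uTv$ lives in one bridge $B_1$, the forest $T\cap B_i$ of any other bridge $B_i$ separates $u$ from $v$, and a $uv$-path of $B_i$ must therefore use a non-tree edge $e_i$ whose fundamental cycle is $xTu\cup uTv\cup vTy\cup\{e_i\}$, passing through both $u$ and $v$. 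Your route avoids the appeal to the cycle-space basis entirely and is more elementary; the paper's route avoids your case analysis on whether $uv$ is an edge and where it sits relative to $T$.

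One small justification is missing in your second case. Walking along the $uv$-path of $B_i$ only guarantees an edge between two \emph{distinct} components of $T\cap B_i$; a priori the walk could cross into a third component that contains neither $u$ nor $v$, and then the fundamental cycle of the crossing edge need not pass through both $u$ and $v$. You should add that $T\cap B_i$ has exactly two components, one containing $u$ and one containing $v$: a component avoiding both would consist only of internal vertices of $B_i$, all of whose neighbours in $G$ (hence all of whose tree edges) lie inside $B_i$, so it would be an entire component of $T$, contradicting the connectedness of the spanning tree. With that sentence in place the construction of $e_i$ is airtight.
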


\begin{proof}
Suppose that $\{u, v\}$ has at least four $\{u,v\}$-bridges and 
that $G$ has a fundamental Tutte tree $T$. 
Because of the $2$-connectedness, every $\{u,v\}$-bridge has
a~$uv$-path. 
It follows that at least three $\{u,v\}$-bridges have an
edge not in $T$ that induces a fundamental cycle, 
and every fundamental cycle is contained in the union of at most 
two $\{u,v\}$-bridges. 
Since we only consider simple graphs, at most one $\{u,v\}$-bridge is trivial. 
If $G$ has a trivial $\{u,v\}$-bridge then it
belongs to $T$, otherwise its fundamental cycle separates the others
(nontrivial) $\{u,v\}$-components. 
If a fundamental cycle is contained in the union of 
two $\{u,v\}$-bridges, then it  separates the 
others $\{u,v\}$-components. 
(Note that whenever $G$ has a trivial $\{u,v\}$-bridge, 
every fundamental cycle in two $\{u,v\}$-bridges contains 
it and the remaining two $\{u,v\}$-bridges are nontrivial.) 
Thus, every fundamental cycle of $T$ 
is completely contained in a $\{u,v\}$-bridge.
Therefore, the fundamental cycles of $T$ do not generate the
cycles containing edges of two $\{u,v\}$-bridges. 
This contradicts the fact that the fundamental cycles 
of a spanning tree
form a basis for the cycle space~\cite[Theorem~1.9.5]{Di:book}.
Hence such a tree $T$ cannot exist.
\end{proof}

\begin{cor}\label{cor:uvhp}
Let $G$ be a $2$-connected graph and $\{u,v\}$ be a $2$-vertex cut. 
If $G$ has a fundamental Tutte tree, 
then every $\{u,v\}$-bridge has a Hamiltonian $uv$-path,
except possibly one. 
Moreover, if there are exactly three $\{u,v\}$-bridges, then every $\{u,v\}$-bridge has a Hamiltonian $uv$-path.
\end{cor}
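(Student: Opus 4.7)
Let $T$ be a fundamental Tutte tree of $G$ and write $uTv$ for the unique $uv$-path in $T$. Because distinct $\{u,v\}$-bridges share only the vertices $u$ and $v$, the path $uTv$ lies inside a single bridge, which I call $B_0$. For any other nontrivial bridge $B$, the subforest $T\cap B$ must have $u$ and $v$ in distinct components: otherwise $T$ would contain a second $uv$-path through $B$, creating a cycle with $uTv$. Call these components $T_u^B$ and $T_v^B$; together they span $V(B)$.

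The key step is to pick, for each nontrivial $B\neq B_0$, a non-tree edge $e=xy$ of $B$ with $x\in V(T_u^B)$ and $y\in V(T_v^B)$. Such an edge exists because the $2$-connectedness of $G$ forces $B$ to contain a $uv$-path, and any such path must cross between $T_u^B$ and $T_v^B$ via some non-tree edge. The fundamental cycle $C_e$ then consists of $e$, the $x$-to-$u$ path in $T_u^B$, the path $uTv$ inside $B_0$, and the $v$-to-$y$ path in $T_v^B$. Since $T$ is a fundamental Tutte tree, $C_e$ is nonseparating; and since both $u$ and $v$ lie in $V(C_e)$, the residue $G-V(C_e)$ splits into at most three pairwise isolated vertex sets:
\begin{inparaenum}[(a)]
\item $V(B_0)\setminus V(uTv)$,
\item $V(B)\setminus V(C_e)$, and
\item $V(B')\setminus\{u,v\}$ for each further bridge $B'$.
\end{inparaenum}
Nonseparation of $C_e$ forces at most one of these sets to be nonempty. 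When part~(b) is empty, $T_u^B$ must collapse to the path from $u$ to $x$ and $T_v^B$ to the path from $v$ to $y$, and then $u\cdots x\,y\cdots v$ is a Hamiltonian $uv$-path in $B$.

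The first claim follows at once: if some $B\neq B_0$ lacks a Hamiltonian $uv$-path, part~(b) must be nonempty, which forces parts~(a) and~(c) to be empty; so $uTv$ itself is a Hamiltonian $uv$-path of $B_0$ and every further bridge is trivial (its sole edge $uv$ serving as its Hamiltonian $uv$-path). Thus $B$ is the only possible exception. For the ``moreover'' statement with exactly three bridges, I apply the argument simultaneously to each non-$B_0$ bridge: the existence of the second non-$B_0$ bridge $B'$ should provide either a nonempty part~(c)---when $B'$ is nontrivial---or the auxiliary fundamental cycle $C_{uv}$---when $B'$ is the trivial edge---which should force part~(b) to be empty for every nontrivial $B\neq B_0$, thereby granting each bridge a Hamiltonian $uv$-path. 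The delicate step is exactly this coupling: one must combine the separating-cycle constraints from fundamental cycles in two distinct bridges, and invoke the cycle $C_{uv}$ when the trivial bridge $\{u,v\}$ is present, to rule out every configuration in which a nontrivial bridge fails to have a Hamiltonian $uv$-path.
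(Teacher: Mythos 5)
Your argument for the first assertion (``except possibly one'') is correct, and it is essentially the paper's approach made explicit: where the paper invokes the basis property of fundamental cycles to obtain a fundamental cycle spanning two bridges, you construct such a cycle directly as $C_e$ for a non-tree edge $e$ crossing between the two components of $T\cap B$, and then read off the conclusion from the fact that the leftover vertex sets of distinct bridges are pairwise non-adjacent in $G-C_e$. That part is fine.

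The ``delicate step'' you flag in the ``moreover'' part is a genuine gap, and it cannot be closed, because that part of the statement is false. The problematic configuration is exactly the one you isolate: three bridges $B_0$ (containing $uTv$), $B_1$ nontrivial, and $B_2$ the trivial edge $uv$. For a crossing edge $e$ in $B_1$, parts (a) and (c) of $G-V(C_e)$ can both be empty for reasons having nothing to do with $B_1$ (namely $uTv$ Hamiltonian in $B_0$ and $B_2$ having no internal vertices), so nonseparation of $C_e$ places no constraint on part (b); and the auxiliary cycle $C_{uv}=uv+uTv$ lives in $B_0\cup B_2$ and only forces part (a) to be empty. Concretely, take $V(G)=\{u,v,a,c,w_1,w_2,w_3\}$ with edges $uv$, $ua$, $va$, and $uw_i$, $vw_i$, $cw_i$ for $i=1,2,3$. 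Then $\{u,v\}$ is a $2$-cut with exactly three bridges, and the spanning tree $T$ with edges $ua$, $av$, $uw_1$, $w_1c$, $cw_2$, $cw_3$ is a fundamental Tutte tree (each of the six fundamental cycles $C_{uv}$, $C_{uw_2}$, $C_{uw_3}$, $C_{vw_1}$, $C_{vw_2}$, $C_{vw_3}$ leaves behind a connected graph). Yet the bridge on $\{u,v,c,w_1,w_2,w_3\}$ is bipartite with $u$, $v$, $c$ on the same side, so it has no Hamiltonian $uv$-path. The paper's own proof has the same flaw: it asserts that for every pair $i,j$ there is a fundamental cycle $C_{ij}$ contained in $B_i\cup B_j$, but every fundamental cycle that meets two bridges must contain $uTv$ and hence meets $B_0$, so no fundamental cycle lies in $B_1\cup B_2$ in the example above (the basis property only guarantees that cycles through $B_1\cup B_2$ are \emph{sums} of fundamental cycles, e.g.\ $C_{uv}+C_{vw_1}$). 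So: your proof of the first claim stands; your proof of the ``moreover'' claim is incomplete as you suspected, and no proof exists.
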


\begin{proof}
Let $T$ be a fundamental Tutte tree in $G$. 
Since the fundamental cycles of $T$ induce a basis of the cycle space, 
there exists at least one fundamental cycle  
contained in the union of two $\{u,v\}$-bridges, $B_1$ and $B_2$, say $C_{12}$.
If there are exactly two $\{u,v\}$-bridges, then $C_{12}$ must contain all 
vertices of at least one of $B_1$ or $B_2$, 
and the corollary follows. 
If there are exactly three $\{u,v\}$-bridges, let $B_3$ be the other one.
Similarly, there exist fundamental cycles $C_{ij}$ contained in the union of $B_i$ and $B_j$. 
It follows that $\{C_{ij}\}$  induces a Hamiltonian $uv$-path in each $\{u,v\}$-bridge.
\end{proof}

A \defi{series extension} of a graph is the subdivision of an edge; a
\defi{parallel extension} is the addition of a new edge joining two
adjacent vertices. We momentarily consider multigraphs.
A graph is \defi{series-parallel} if it can be obtained from $K_2$ by a sequence
of series and parallel extensions.  
It is well-known that a graph is series-parallel if and only if it has no $K_4$
minor~\cite[Section~11.2]{BrLe99}. 

\begin{thm}\label{thm:spg}
A $2$-connected series-parallel graph $G$ has a fundamental 
Tutte tree if and only if, for every $2$-vertex cut $\{u,v\}$,
\begin{inparaenum}[(i)]
\item\label{itm:1spg} there are at most three $\{u,v\}$-bridges;
\item\label{itm:2spg} if there are exactly two $\{u,v\}$-bridges, 
one has a  Hamiltonian $uv$-path; 
\item\label{itm:3spg} 
if there are three $\{u,v\}$-bridges, 
each one has a Hamiltonian $uv$-path and at most one 
is $2$-edge-connected.
\end{inparaenum}
\end{thm}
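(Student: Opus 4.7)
For the forward direction, conditions (i), (ii), and the Hamiltonian $uv$-path half of (iii) are immediate from Lemma~\ref{lem:ftt2c} and Corollary~\ref{cor:uvhp}. The new content is the ``at most one 2-edge-connected'' clause of (iii), which I plan to prove by contradiction. Assume $G$ has a fundamental Tutte tree $T$ and that two of the three $\{u,v\}$-bridges, say $B_1$ and $B_2$, are 2-edge-connected. The unique $uv$-path of $T$ lies in exactly one bridge $B_k$, so at least one of $B_1, B_2$ is distinct from $B_k$; call it $B_i$. Then $T\cap B_i$ is a two-component spanning forest with components $T_i^u \ni u$ and $T_i^v \ni v$, and 2-edge-connectedness of $B_i$ forces at least two non-tree edges of $B_i$ to cross between $V(T_i^u)$ and $V(T_i^v)$. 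For each such crossing edge $e$, the fundamental cycle $C_e$ is composed of $e$ together with the $T$-paths inside $T_i^u$, $T\cap B_k$, and $T_i^v$. Since $C_e$ entirely misses the third bridge $B_j$, whose internals form a component of $G-C_e$, for $C_e$ to be nonseparating $V(C_e)$ must exhaust $V(B_i)\cup V(B_k)$. This forces each of $T_i^u$ and $T_i^v$ to be a Hamiltonian path (or a singleton) of its own vertex set, with fixed endpoints at $u$, $v$, and the two ends of $e$. Having two crossing edges share both endpoints contradicts the simplicity of $G$.

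For the backward direction I would induct on $|V(G)|$, with base cases $|V(G)|\le 3$ where $G$ is $K_2$ or $K_3$ and any spanning tree works. For $|V(G)|\ge 4$, a 2-vertex cut $\{u,v\}$ exists (since a simple 2-connected series-parallel graph on four or more vertices is not 3-connected), and I designate a \emph{main} bridge $B^\star$: the bridge with the Hamiltonian $uv$-path in the two-bridge case, and in the three-bridge case the unique 2-edge-connected bridge if one exists, otherwise any of the three. Build $T$ by placing a Hamiltonian $uv$-path of $B^\star$ into $T$ (or the single edge $uv$ if $B^\star$ is trivial); for every other bridge $B$, install a spanning two-component forest of $B$ with $u$ and $v$ in distinct components, taken to be a Hamiltonian $uv$-path of $B$ minus an edge when available, or recursively as the restriction of a fundamental Tutte tree of $B+uv$ (a 2-connected series-parallel graph of strictly smaller order, whose 2-vertex cuts inherit conditions (i)--(iii) from those of $G$).

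Verifying that $T$ is a fundamental Tutte tree splits according to the non-tree edge $f$ generating each fundamental cycle $C_f$. A cycle entirely inside $B^\star$ avoids at least one of $\{u,v\}$, since $uv\notin E(B^\star)$, and the surviving vertex of $\{u,v\}$ keeps $G-C_f$ connected through the other bridges. The edge deleted from a non-main bridge's Hamiltonian path produces a cycle that is Hamiltonian inside the union of two bridges, leaving at most the internals of a third bridge as one connected piece in $G-C_f$. Chords inside a non-main bridge $B$ are handled by the inductive construction on $B+uv$, with the deleted edge serving as a reconnector between leftover vertices of $B$ and the rest of $G$ through the main bridge.

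The main obstacle I anticipate is the recursion on $B+uv$ when $B$ has no Hamiltonian $uv$-path: one must confirm that the hypotheses on 2-vertex cuts of $G$ transfer cleanly to $B+uv$, in particular that the 2-edge-connectivity restriction in (iii) is preserved. This is delicate because a 2-vertex cut of $B+uv$ may group bridges of $G$ in nonobvious ways; carrying out this bookkeeping is where the tightness of the 2-edge-connectivity hypothesis plays its essential role.
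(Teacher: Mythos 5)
Your forward direction is essentially sound and close in spirit to the paper's: both arguments exploit the fact that a $2$-edge-connected bridge $B_i$ not carrying the tree's $uv$-path must contribute at least two non-tree edges crossing between the two components of $T\cap B_i$, and then derive a contradiction from the second such edge. (The paper phrases it as: $C_{xy}$ must be Hamiltonian on $B\cup B'$, forcing $ux$- and $yv$-paths covering $B$, and then a second non-cut edge $x'y'$ yields a separating fundamental cycle. Be a little careful with your claim that ``$V(C_e)$ must exhaust $V(B_i)\cup V(B_k)$'' in the subcase where the third bridge $B_j$ is the trivial edge $uv$ and has no internal vertices; the conclusion still holds but needs a separate sentence.)

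The backward direction, however, has a genuine gap, and it is exactly the one you flag yourself. Your recursion on $B+uv$ rests on two unestablished claims: that $B+uv$ inherits conditions (i)--(iii) (a $2$-vertex cut of $B+uv$ need not be a $2$-vertex cut of $G$, and even when it is, its bridges regroup), and that a fundamental Tutte tree of $B+uv$ can be chosen to contain the edge $uv$ so that its restriction to $B$ is a two-component forest. Worse, even granting both, the verification fails at the crossing chords: a non-tree edge of $B$ with endpoints in the two components of $T\cap B$ has a fundamental cycle passing through $B^\star$, and when the third bridge is nontrivial this cycle must be a Hamiltonian cycle of $B\cup B^\star$ to be nonseparating --- a property the recursive construction does not deliver. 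The paper avoids all of this with a dichotomy you are missing: if \emph{every} $2$-vertex cut has exactly two bridges, a longest-cycle argument using the absence of $K_4$ minors shows $G$ is Hamiltonian outright, so no recursion is ever needed; otherwise one fixes a single $2$-vertex cut with three bridges, takes a Hamiltonian $uv$-path in each (guaranteed by (iii)), and deletes a cut edge from each of two non-$2$-edge-connected bridges --- the cut-edge choice ensures no chord crosses between the two pieces of a broken path, so every fundamental cycle stays inside one bridge or is Hamiltonian on two of them. You would need to either prove this Hamiltonicity dichotomy or carry out the inheritance bookkeeping in full; as written, the construction is not a proof.
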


\begin{proof}
\noindent{($\Rightarrow$)}
Recall that a series-parallel graph is planar and not $3$-connected. 
If $G$ is a triangle, then it has a fundamental Tutte 
tree and there is no $2$-vertex cut. 
Otherwise, there exists at least one $2$-vertex cut $\{u,v\}$ in $G$. 
Suppose $G$ has a fundamental Tutte tree $T$.
Items~(\ref{itm:1spg}) and~(\ref{itm:2spg}) 
follow from Lemma~\ref{lem:ftt2c} and Corollary~\ref{cor:uvhp}, respectively.  

Now, suppose that there are three 
$\{u,v\}$-bridges and two of them are $2$-edge-connected. 
It follows from Corollary~\ref{cor:uvhp} that
every $\{u,v\}$-bridge has a Hamiltonian $uv$-path. 
At least one of the $2$-edge-connected $\{u,v\}$-bridges, 
say $B$, does not have a $uv$-path in $T$.
Then $B$ has an edge $xy$ whose fundamental cycle
$C_{xy}$ contains an edge of another $\{u,v\}$-bridge $B'$.
If $C_{xy}$ is not a Hamiltonian cycle in the subgraph induced 
by the union of $B$ and $B'$, then $C_{xy}$ is separating. 
It follows that $C_{xy}$ restricted to $B$ 
is a Hamiltonian $uv$-path.
Without loss of generality, we may assume that there exist 
a $ux$-path and a $yv$-path in $T$ 
whose union contains all vertices of $B$. 
Since $xy$ is not a cut edge, there exists an edge $x'y'$ not in $T$ 
connecting the $ux$-path and the $yv$-path.
The fundamental cycle $C_{x'y'}$ is separating, which is a
contradiction. 
Then at most one $\{u,v\}$-bridge is $2$-edge-connected, 
which proves item~(\ref{itm:3spg}).

\noindent{($\Leftarrow$)} 
Item~(\ref{itm:1spg}) is a necessary
condition for the existence of a fundamental Tutte tree.  

We define a fundamental Tutte tree according to the number of bridges:
\begin{description}
\item 1. \textit{For every $2$-vertex cut $\{u,v\}$ there are exactly two 
$\{u,v\}$-bridges.}
\end{description}

\noindent
Let $C$ be a longest cycle in $G$. 
If $C$ is a Hamiltonian cycle, we are done. 
So suppose that there is a vertex $w$ not in $C$. 
Since $G$ is $2$-connected, there are two distinct vertices $x$ and
$y$ in $C$, and a $wx$-path and a $wy$-path internally disjoint from
each other.
If $x$ and $y$ are adjacent in $C$, 
then we get a longer cycle replacing the edge
$xy$ in $C$ by the concatenation at $w$ of the two paths. 
It follows that there is at least one vertex in each 
$xy$-path of $C$.
Because there are exactly two $\{x,y\}$-bridges, $G$ has 
a $K_4$ minor.
Hence $C$ is a Hamiltonian cycle. 
In this case, a fundamental Tutte tree is obtained from a Hamiltonian
cycle by removing one edge.

\begin{description}
\item 2. \textit{There exists a $2$-vertex cut $\{u,v\}$ with three 
$\{u,v\}$-bridges.}
\end{description}

\noindent
By item~(\ref{itm:3spg}), every $\{u,v\}$-bridge
has a Hamiltonian $uv$-path and at most one of the $\{u,v\}$-bridges  is
$2$-edge-connected. 
Let $xy$ and $x'y'$ be cut edges, one for each no $2$-edge-connected
$\{u,v\}$-bridge. 
Note that, for the respective $\{u,v\}$-bridge, every 
Hamiltonian $uv$-path must contain the cut edge.  
The fundamental Tutte tree is as follows. 
Take a  Hamiltonian $uv$-path in each
$\{u,v\}$-bridge and remove edges $xy$ and $x'y'$. 
It is easy to verify that every fundamental cycle is nonseparating.
\end{proof}

\begin{proof}[Proof of Theorem~\ref{thm:fundtuttetree}]
The case when $G$ has a cut vertex follows from
Lemma~\ref{lem:ftt1c}. So we may assume that $G$ is at least
$2$-connected. Let $T$ be a fundamental Tutte tree and suppose that
there exist a plane drawing $D$ of $G$ without all the leaves in the
same face. Let $u$ and $v$ be two leaves in different faces of
$D$. Then there exist a cycle $C$ that separates $u$ from
$v$~\cite[Theorem~3.1]{Tu75}; 
that is, a cycle containing neither
$u$ nor $v$ and such that they are in distinct regions of the Jordan
curve $D[C]$. Since the fundamental cycles form a basis of the cycle
space~\cite[Theorem~1.9.5]{Di:book}, 
it follows that there exists a
fundamental cycle $C_e$ of $T$ that separates~$u$ from $v$.  Because
$G$ is planar, $C_e$ is a separating fundamental cycle, and so we have
a contradiction.
\end{proof}

\begin{cor}
There are infinitely many planar $3$-connected graphs with no
fundamental Tutte tree.
\end{cor}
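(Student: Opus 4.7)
The plan is to reuse the construction $G_n$ from Lemma~\ref{lem:nontutte} and combine it with the necessary condition supplied by Theorem~\ref{thm:fundtuttetree}. Recall that, for every integer $n \geq 5$, $G_n$ is obtained from a $3$-connected planar triangulation $H_n$ on $n$ black vertices by inserting, inside each triangular face, a new white vertex joined to the three black vertices bounding that face. The graph $G_n$ is planar and $3$-connected, and it is itself a triangulation in which every face is a triangle carrying exactly one white vertex on its boundary.

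First I would reprove, essentially as in Lemma~\ref{lem:nontutte}, that every spanning tree $T$ of $G_n$ has at least two white leaves. Since the white vertices form an independent set of size $2n-4$ and every edge of $T$ has at most one white endpoint, the sum of the degrees in $T$ of the white vertices is at most $|E(T)| = 3n-5$. On the other hand, if $T$ had at most one white leaf, the remaining $2n-5$ white vertices would each contribute at least $2$ to this sum, giving a total of at least $4n-9$, which strictly exceeds $3n-5$ whenever $n \geq 5$, a contradiction.

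Next I would invoke Theorem~\ref{thm:fundtuttetree}: if $T$ were a fundamental Tutte tree of $G_n$, then in any plane drawing of $G_n$ all leaves of $T$ would have to lie on the boundary of a single face. Fixing the natural plane drawing of $G_n$ inherited from the embedding of $H_n$, every face is a triangle containing exactly one white vertex on its boundary, so no face can accommodate two or more white leaves. This contradicts the bound from the previous step, so $G_n$ has no fundamental Tutte tree; letting $n$ range over integers at least $5$ yields infinitely many such graphs.

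I do not anticipate a real obstacle. The only mildly technical step is the degree count bounding the number of white leaves from below, and it is essentially the one implicit in Lemma~\ref{lem:nontutte}; the face structure of $G_n$ is immediate from the construction, and Theorem~\ref{thm:fundtuttetree} does all the heavy lifting of converting the combinatorial constraint on leaves into an embedding-theoretic one.
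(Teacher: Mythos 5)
Your proposal is correct and follows essentially the same route as the paper: it reuses the graphs $G_n$ from Lemma~\ref{lem:nontutte}, shows every spanning tree has two white leaves, and applies Theorem~\ref{thm:fundtuttetree} after observing that no face of $G_n$ contains two white vertices. The paper's proof is just a terser statement of the same argument; your explicit degree count and face-structure check are exactly the details it leaves implicit.
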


\begin{proof}
Let $n \ge 5$ be an integer and $G_n$ be as defined in the proof of Lemma~\ref{lem:nontutte}. It follows that every spanning tree of $G_n$ has at least two leaves in different faces and therefore $G_n$ has no fundamental Tutte tree.
\end{proof}

If a graph has a Tutte tree, then such tree is also a fundamental
Tutte tree. However, there exist (planar) $3$-connected graphs with a
fundamental Tutte tree, but without a Tutte tree, as for instance the
Herschel graph (Figure~\ref{fig:herschel1}), which is nonhamiltonian
and has no triangles.

\begin{figure}[h!]
\centering
\includegraphics[width=0.4\textwidth]{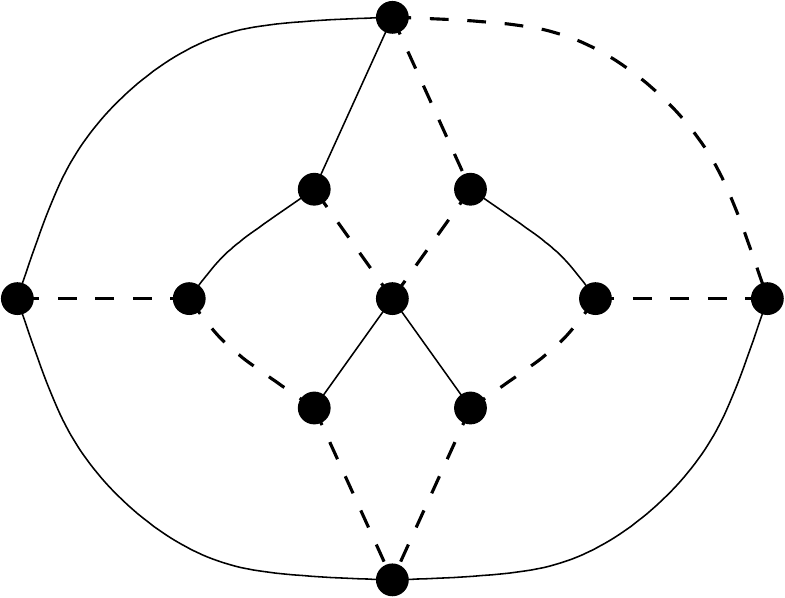}
\caption{Herschel graph, with a fundamental Tutte tree in dashed edges, 
       has no Tutte tree.}
\label{fig:herschel1}
\end{figure}

In the case of planar graphs, traceability is a necessary condition
(Corollary~\ref{cor:tuttehamil})
but not sufficient (Herschel graph) 
for the existence of a Tutte tree. 
On the other
hand, there are nontraceable (and thus nonhamiltonian) cubic planar
$3$-connected graphs~\cite{Za80} with a fundamental Tutte tree,
e.g., the Zamfirescu graph (Figure~\ref{fig:zamfi}).

\begin{figure}[h]
\centering
\resizebox{0.5\textwidth}{!}{\input{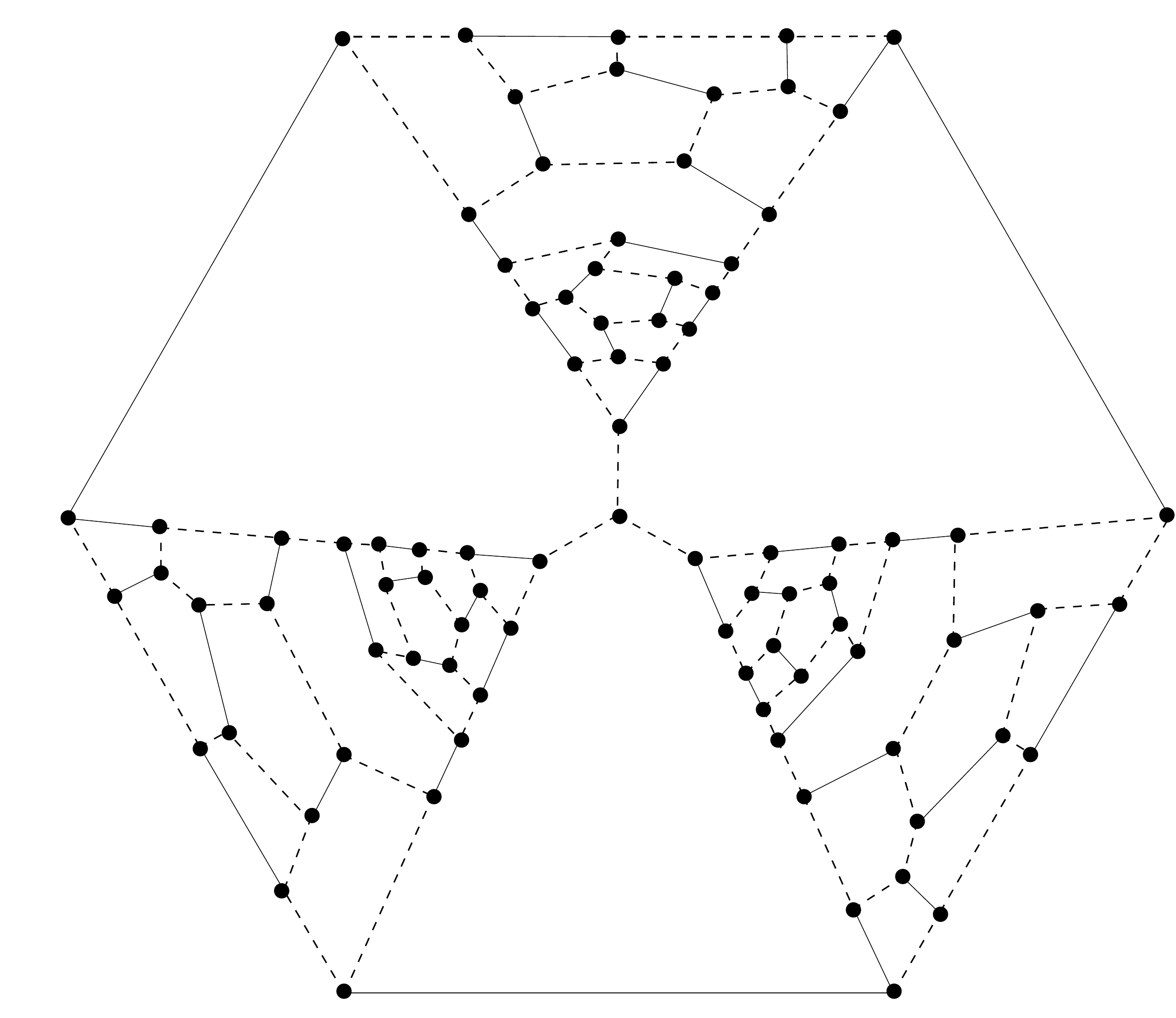_t}}
\caption{Zamfirescu graph which is nontraceable cubic planar $3$-connected\\ 
with a fundamental Tutte tree in dashed edges.}
\label{fig:zamfi}
\end{figure}

Having a spanning tree with all leaves in a same face does not guarantee
that a plane graph has a fundamental Tutte tree. 

\begin{prop}\label{prop:noftt}
Let $G$ be the planar graph obtained as follows: 
take the Zamfirescu graph 
and let $v_1$ and~$v_2$ be the vertices displayed in Figure~\ref{fig:zamfi}.
Let $w$ be a new vertex and joint it to $v_1$ and $v_2$.
Finally add the edge $v_1v_2$.
Then: 
\begin{inparaenum}[(i)]
\item \label{item:noftt1}
there exist a spanning tree $T$ of $G$ and a plane drawing of $G$ 
for which all leaves of $T$ are in the same face; and
\item \label{item:noftt2}
$G$ has no fundamental Tutte tree.
\end{inparaenum}
\end{prop}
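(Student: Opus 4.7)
The plan is to handle parts~(\ref{item:noftt1}) and~(\ref{item:noftt2}) separately, with part~(\ref{item:noftt2}) reducing to a direct application of Corollary~\ref{cor:uvhp} and part~(\ref{item:noftt1}) being an explicit construction.

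For part~(\ref{item:noftt2}), first note that $G$ is $2$-connected: the Zamfirescu graph is $3$-connected, and $w$ is joined to both $v_1$ and $v_2$. Moreover $\{v_1,v_2\}$ is a $2$-vertex cut, since removing it leaves $w$ isolated from the remaining Zamfirescu vertices. There are exactly three $\{v_1,v_2\}$-bridges in $G$: the Zamfirescu graph itself (call it $B_1$), consisting of all its vertices and edges; the trivial bridge $B_2$ given by the added edge $v_1v_2$; and the length-two bridge $B_3$ consisting of $w$ together with the edges $wv_1$ and $wv_2$. Applying the three-bridges case of Corollary~\ref{cor:uvhp}, if $G$ had a fundamental Tutte tree, then every $\{v_1,v_2\}$-bridge would have to contain a Hamiltonian $v_1v_2$-path. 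In particular, $B_1$ would contain a Hamiltonian $v_1v_2$-path, that is, a Hamiltonian path in the Zamfirescu graph, contradicting its nontraceability.

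For part~(\ref{item:noftt1}), the plan is to construct $T$ and the drawing together, starting from the drawing of the Zamfirescu graph shown in Figure~\ref{fig:zamfi} and its fundamental Tutte tree $T_Z$ (the dashed subgraph). By Theorem~\ref{thm:fundtuttetree}, all leaves of $T_Z$ lie on the outer face of that drawing, and $v_1,v_2$ also lie on the outer-face boundary. Take $T=T_Z+wv_1$, a spanning tree of $G$ with leaf set $\{w\}\cup(L(T_Z)\setminus\{v_1\})$.

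To draw $G$, keep the Zamfirescu drawing as is and perform the additions inside its outer face: route the new edge $v_1v_2$ along one of the two arcs of the outer boundary between $v_1$ and $v_2$, and place $w$ with the edges $wv_1,wv_2$ on the other arc. Choosing the arcs so that all leaves of $T_Z$ sit on the same arc as $w$, the resulting drawing has a face whose boundary is this arc together with the path $v_1wv_2$, and this face contains every leaf of $T$ on its boundary. The main technical point is to verify, by inspection of Figure~\ref{fig:zamfi}, that the leaves of $T_Z$ do lie on a common arc of the outer boundary delimited by $v_1$ and $v_2$; otherwise, one replaces $T_Z$ by a different spanning tree of the Zamfirescu graph with this property, which is enough because~(\ref{item:noftt1}) asks only for a spanning tree.
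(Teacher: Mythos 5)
Your proposal is correct and follows essentially the same route as the paper: part~(\ref{item:noftt2}) is the same application of the three-bridge case of Corollary~\ref{cor:uvhp} combined with the nontraceability of the Zamfirescu graph, and part~(\ref{item:noftt1}) uses the same tree $T_Z+wv_1$ (the paper simply asserts the drawing exists, whereas you spell out how to place $v_1v_2$ and $w$ in the outer face). Your added care in verifying that there are exactly three $\{v_1,v_2\}$-bridges, and in noting that the leaves of $T_Z$ must lie on a common arc between $v_1$ and $v_2$, only makes the argument more complete.
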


\begin{proof}
We get a spanning tree $T$ of $G$ just adding the edge $v_1w$ to the
spanning tree in dashed edges shown in Figure~\ref{fig:zamfi}. It is
easy to obtain a plane drawing of $G$ such that all leaves of $T$ are in
the same face. This proves~\eqref{item:noftt1}.

By Corollary~\ref{cor:uvhp}, if $G$ has a fundamental Tutte tree then
every $\{v_1, v_2\}$-bridge should have a  Hamiltonian
$v_1v_2$-path. The Zamfirescu graph is a $\{v_1,v_2\}$-bridge of $G$, 
but has no a Hamiltonian $v_1v_2$-path. 
It follows that $G$ has no fundamental Tutte tree.
\end{proof}


\section{Concluding Remarks}\label{sec:cr}

We have not been able to give a necessary and sufficient condition
for the existence of either a Tutte tree or a fundamental Tutte tree,
so some interesting questions remain open.

Every $4$-connected planar graph has a (fundamental) Tutte tree, since
it is Hamiltonian. 
We do not know a $4$-connected nonplanar
graph with no (fundamental) Tutte tree. 
This yields the following question.

\begin{ques}
Does every $4$-connected graph have a (fundamental) Tutte tree?
\end{ques}

Proposition~\ref{prop:noftt} shows that the existence of a spanning
tree with all its leaves in the same face is not a sufficient
condition for having a fundamental Tutte tree.  
Nevertheless the graph
constructed is not $3$-connected. So some questions arise.

\begin{ques}
Is there a $3$-connected planar graph $G$ with a spanning tree with 
all its leaves in the same face, such that $G$ has not a 
fundamental Tutte tree?
\end{ques}

\begin{ques}
Let $G$ be a $3$-connected planar graph with a spanning tree
 whose all  leaves are in the same face. 
Does $G$ have a fundamental Tutte tree?
\end{ques}





\bibliographystyle{elsarticle-num}
\bibliography{bibtuttetree}


\end{document}